\newtheorem{thm}{Theorem}[section]
\newtheorem{cor}[thm]{Corollary}
\newtheorem{lem}[thm]{Lemma}
\theoremstyle{definition}
\newtheorem{defn}[thm]{Definition}
\newtheorem{exm}[thm]{Example}
\newtheorem{rem}[thm]{Remark}
\newtheorem{conj}[thm]{Conjecture}
\numberwithin{equation}{section}
\DeclareMathOperator{\NN}{\mathbb {N}}
\DeclareMathOperator{\ZZ}{\mathbb {Z}}
\DeclareMathOperator{\RR}{\mathbb {R}}
\DeclareMathOperator{\lk}{lk}
\DeclareMathOperator{\supp}{supp}
\DeclareMathOperator{\NP}{NP}
\DeclareMathOperator{\reg}{reg}
\def\D {\Delta}
\def\T {\mathcal T}
\def\U {\mathcal U}
\def\W {\mathcal W}
\def\a {\mathbf a}
\def\b {\mathbf b}
\def\c {\mathbf c}
\def\e {\mathbf e}
\def\w {\mathbf w}
\def\m {\mathfrak m}
\def\k {\mathrm k}
\def\h {\widetilde{H}}
\begin{document}

\title[Integral closure weighted oriented graphs]{Regularity of integral closures of edge ideals of weighted oriented graphs}

\textbf{\author{Nguyen Cong Minh}
	\address{Faculty of Mathematics and Informatics, Hanoi University of Science and Technology, 1 Dai Co Viet, Hanoi, Vietnam}
	\email{minh.nguyencong@hust.edu.vn}}

\author{Thanh Vu}
\address{Institute of Mathematics, VAST, 18 Hoang Quoc Viet, Hanoi, Vietnam}
\email{vuqthanh@gmail.com}

\author{Guangjun Zhu}
\address{School of Mathematical Sciences, Soochow University, Suzhou, Jiangsu, 215006, P. R. China}
\email{zhuguangjun@suda.edu.cn}

\subjclass[2020]{13B22, 13D02, 13F55}
\keywords{Integral closure; weighted oriented graphs; regularity}

\date{}

\dedicatory{Dedicated to Professor Bernd Ulrich on the occasion of his 70th birthday}

\begin{abstract}
    We prove that the regularity cannot increase when taking the integral closure for edge ideals of arbitrary weighted oriented graphs.
\end{abstract}

\maketitle

\section{Introduction}
\label{sect_intro}

K\"uronya and Pintye \cite{KP} studied the relationship between the log-canonical thresholds and the Castelnouvo-Mumford regularity of coherent sheaves of ideals in projective spaces and made the following striking conjecture
\begin{conj}\label{conj_KP} Let $I$ be a homogeneous ideal in a polynomial ring $S = \k[x_1,\ldots,x_n]$ over a field $\k$. Denote by $\overline{I}$ the integral closure of $I$. Then 
$$\reg (\overline{I}) \le \reg (I),$$
where  $\reg$ denotes the Castelnuovo-Mumford regularity.
\end{conj}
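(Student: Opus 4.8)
The plan is to prove the inequality by a structural argument rather than by the containment $I \subseteq \overline{I}$, which is useless on its own since regularity is not monotone under inclusion. The first handle is the short exact sequence
$$0 \to I \to \overline{I} \to \overline{I}/I \to 0,$$
whose long exact sequence in local cohomology gives $\reg(\overline{I}) \le \max\{\reg(I), \reg(\overline{I}/I)\}$, so it suffices to bound the normalization defect by $\reg(\overline{I}/I) \le \reg(I)$. The natural arena in which to control this defect is that of monomial ideals, where $\overline{I}$ is purely combinatorial, and the overall strategy is to reduce the general homogeneous case to the monomial case and then bound the defect using the polyhedral data.

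For a monomial ideal $I$ the integral closure is $\overline{I} = (x^{\a} : \a \in \NP(I) \cap \ZZ^n)$, where $\NP(I)$ is the Newton polyhedron, so $I$ and $\overline{I}$ have the same Newton polyhedron and differ only by the lattice points filled in below its facets. I would compute both regularities through Takayama's combinatorial formula, which expresses $\dim_{\k} H^i_{\m}(S/I)_{\a}$ via the reduced homology $\h_{\bullet}$ of degree-$\a$ simplicial complexes $\D_{\a}(I)$ attached to $I$. The crux is to compare $\D_{\a}(I)$ with $\D_{\a}(\overline{I})$: the extra generators of $\overline{I}$ can only enlarge these complexes and hence only kill homology, and the task is to show that this killing occurs solely in internal degrees $\a$ lying at or below the top nonvanishing level recorded by $\reg(I)$, so that no new local cohomology is created in a degree forcing $\reg(\overline{I}) > \reg(I)$. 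The facet structure of $\NP(I)$ should govern precisely which degrees $\a$ are affected.

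I expect two steps to be the main obstacles. First, the reduction of the general homogeneous case to the monomial case: the obvious attempts via the generic initial ideal or polarization stumble because the generic initial operation is itself not monotone under inclusion, because regularity is not monotone under inclusion, and because the relation between $\operatorname{in}(\overline{I})$ and $\overline{\operatorname{in}(I)}$ is unclear — initial forms do not respect the sums appearing in an integral equation, so integral dependence need not survive passage to initial ideals. Second, even within the monomial case the homological comparison of the complexes $\D_{\a}$ is delicate, since filling in Newton-polyhedron lattice points can alter homology in several codimensions at once. This is exactly the step where the known proofs for graphs and for weighted oriented graphs succeed by exploiting the rigid combinatorial structure of the edge generators; a proof of the full conjecture would require a genuinely new polyhedral-homological input controlling how integral closure modifies $H^i_{\m}$ degree by degree.
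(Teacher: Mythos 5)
You are attempting to prove a statement that the paper itself does not prove: this is Conjecture \ref{conj_KP}, which is open, and the paper establishes only the special case of edge ideals of weighted oriented graphs (Theorem \ref{main_thm}). Your proposal is, by your own closing admission, a program rather than a proof, so it cannot be judged complete; but beyond that, two of its concrete steps are flawed. First, the reduction via the short exact sequence $0 \to I \to \overline{I} \to \overline{I}/I \to 0$ buys nothing: the bound $\reg(\overline{I}) \le \max\{\reg(I), \reg(\overline{I}/I)\}$ is correct, but the same sequence gives $\reg(\overline{I}/I) \le \max\{\reg(I)-1, \reg(\overline{I})\}$, so the target bound $\reg(\overline{I}/I) \le \reg(I)$ is \emph{equivalent} to the conjecture, not a simplification of it. Second, your homological comparison has the inclusion backwards: since $I \subseteq \overline{I}$, one has $\sqrt{I:x^{\a}} \subseteq \sqrt{\overline{I}:x^{\a}}$, hence $\D_{\a}(\overline{I}) \subseteq \D_{\a}(I)$ --- the extra generators of $\overline{I}$ \emph{shrink} the degree complexes rather than enlarge them --- and in any case passing to a subcomplex (or a supercomplex) can create reduced homology just as easily as kill it, e.g.\ deleting the top face of a simplex creates a sphere. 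So the assertion that the modification ``can only kill homology'' is wrong both in direction and in content, and it is precisely the point where the difficulty lives. Your first obstacle is also genuinely fatal as you suspect: no reduction from arbitrary homogeneous ideals to monomial ideals is known, and the conjecture is open even for monomial ideals.

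It is worth contrasting your picture with the mechanism the paper uses in its special case, because it is the opposite of the ``killing'' heuristic. Via Lemma \ref{Key0}, one takes an extremal pair $(\a,i)$ of $J = \overline{I(D,\w)}$, reduces by induction and Lemma \ref{lem_regularity_equality_adding_variable} to the case $F = \emptyset$, i.e.\ $\h_{i-1}(\D_{\a}(J);\k) \neq 0$, and then computes $\sqrt{J:x^{\a}}$ explicitly (Lemma \ref{lem_associated_radicals_integral}): it equals $I(G)$ plus the ideals $n(W)$ coming from subsets $W$ of $\supp\a$ with $\a$-capacity at least $1$. Lemma \ref{lem_trivial_homology} shows that whenever any $n(W)$ genuinely contributes, the degree complex is acyclic; hence nonvanishing homology forces $\sqrt{\overline{I}:x^{\a}} = \sqrt{I:x^{\a}}$, so every extremal pair of $\overline{I}$ is already a critical pair of $I$, and $\reg(S/\overline{I}) \le \reg(S/I)$ follows. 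In other words, homology survives integral closure only in degrees where the closure changes the associated radical not at all --- there is no degree-by-degree control of how homology is destroyed, which is the new polyhedral-homological input your plan correctly identifies as missing but does not supply.
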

Integral closures of ideals also play an important role in the multiplicity and singularity theory \cite{PTUV, R, T, UV}. To our knowledge, very little is known about this conjecture even for monomial ideals, partly due to the fact that computing the integral closure of an ideal is a difficult problem. In \cite{MV1}, Minh and Vu proved that $\reg (\overline{I^s}) = \reg (I^s)$ for natural exponents $s \le 4$ for arbitrary edge ideals of simple graphs and arbitrary natural exponents $s$ when $I$ is the Stanley-Reisner ideal of a one-dimensional simplicial complex. In this paper, we prove Conjecture \ref{conj_KP} for edge ideals of arbitrary weighted oriented graphs. Let us recall the notion of weighted oriented graphs and their edge ideals. 

Let $(D, E,\w)$ be a weighted oriented graph without isolated vertices, where $\w: V(D) \to \ZZ_+$ is a weight function on the vertex set of $D$. Assume that $D$ has $|V(D)| = n$ vertices. We then identify the set of vertices of $D$ with the set $[n] = \{1,\ldots,n\}$. The edge ideal of $(D,E,\w)$ is defined by  
$$I(D,\w) = \left ( x_i x_j^{\w(j)} \mid (i,j) \in E(D) \right ) \subseteq S = \k[x_1,\ldots,x_n].$$ 
The edge ideal of a weighted oriented graph was introduced in \cite{GMSVV, PRT}, where the authors described the primary decomposition of $I(D,\w)$ and studied the unmixed and Cohen-Macaulay properties of $I(D,\w)$. When $\w(j) = 1$ for all $j \in [n]$, then $I(D,\w) = I(G)$ is the usual edge ideal of the underlying undirected graph $G$ associated with $D$. Hence, we assume that $\w$ is nontrivial, i.e., $\w(j) > 1$ for some vertex $j \in [n]$, which is not a source vertex of $D$.

In \cite{MVZ}, we classified all integrally closed edge ideals of weighted oriented graphs. Indeed, when the weights are nontrivial, most of them are not integrally closed. In this paper, we prove
\begin{thm}\label{main_thm}
    Let $(D, E,\w)$ be an arbitrary weighted oriented graph. Then 
    $$\reg \left ( \overline{I(D ,\w)} \right ) \le \reg (I(D,\w)).$$
\end{thm}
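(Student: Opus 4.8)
The plan is to work entirely in the monomial category and to compute regularity combinatorially. Since $I=I(D,\w)$ is a monomial ideal, its integral closure $\overline{I}$ is the monomial ideal whose exponent set is $\NP(I)\cap\ZZ^n$, where $\NP(I)=\mathrm{conv}\{\,\e_i+\w(j)\e_j : (i,j)\in E(D)\,\}+\RR_{\ge 0}^n$ is the Newton polyhedron; thus the minimal generators of $\overline{I}$ are the minimal lattice points of $\NP(I)$, the new ones being those not already in $I$ (for instance, for an oriented $r$-cycle with all weights $2$ one checks that $x_1\cdots x_r\in\overline I\setminus I$). To access regularity I would use Takayama's formula: for a monomial ideal $J$ and $\a\in\ZZ^n$ with negative support $G_\a=\{i:a_i<0\}$, one has $\dim_\k H^i_\m(S/J)_\a=\dim_\k\h^{\,i-|G_\a|-1}(\Delta_\a(J);\k)$ for a degree complex $\Delta_\a(J)$, so that
\[
\reg(S/J)=\max\bigl\{\,|\a|-i : \h^{\,i-|G_\a|-1}(\Delta_\a(J);\k)\neq 0\,\bigr\}.
\]
The goal $\reg(\overline I)\le\reg(I)$ is equivalent to $\reg(S/\overline I)\le\reg(S/I)$.

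The induction is on the total weight excess $\sum_{j}(\w(j)-1)$. When this is $0$ all weights equal $1$, so $I(D,\w)=I(G)$ is squarefree, hence radical and therefore integrally closed; then $\overline I=I$ and the inequality is an equality. For the inductive step I would fix a vertex $t$ with $\w(t)\ge 2$ and split along $x_t$, using the short exact sequence
\[
0\longrightarrow \bigl(S/(\overline I : x_t)\bigr)(-1)\xrightarrow{\ \cdot x_t\ } S/\overline I\longrightarrow S/(\overline I, x_t)\longrightarrow 0,
\]
which yields $\reg(S/\overline I)\le\max\{\reg(S/(\overline I:x_t))+1,\ \reg(S/(\overline I,x_t))\}$. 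The two quotients are designed to be smaller instances: passing to $(\overline I,x_t)$ sets $x_t=0$ and should recover the integral closure of the edge ideal of the deleted graph $D\setminus t$, decreasing the number of vertices, while $\overline I : x_t$ should recover the integral closure of an edge ideal whose weight at $t$ has dropped by one, decreasing the weight excess. When $t$ is a sink this is transparent, since $I:x_t=I(D,\w')$ with $\w'(t)=\w(t)-1$; in general $x_t$ also divides the out-edges at $t$, producing pure-power generators, so I would carry out the induction over the slightly larger class of edge ideals allowed to have such extra monomial generators, a class for which the combinatorics of \cite{MVZ} remain available.

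The main obstacle is that integral closure commutes with neither colon nor restriction by a variable: in general $\overline I : x_t\neq\overline{I:x_t}$, and the image of $\overline I$ in $S/(x_t)$ need not be integrally closed, so the two quotients appearing above are not literally the integral closures of the smaller ideals to which I want to apply the inductive hypothesis. The technical heart of the argument is therefore a pair of comparison lemmas showing that, for the Newton polyhedra arising from weighted oriented graphs, these operations are nonetheless controlled --- concretely, that $\overline I : x_t$ and $(\overline I , x_t)$ are sandwiched between the relevant smaller edge ideal and its integral closure in a way that does not raise regularity. This is exactly where the ``$1$ versus $\w(j)$'' asymmetry of the generators and the explicit description of the cycle-type generators of $\overline I$ must be exploited, by comparing $\NP(I):x_t$ and $\NP(I)\cap\{a_t=0\}$ with the Newton polyhedra of the reduced graphs.

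Finally I would close the induction by combining these comparisons with Takayama's formula rather than with the crude inequality above, so as to track the cohomological shifts exactly. For each multidegree $\a$ one compares the degree complex $\Delta_\a(\overline I)$ with $\Delta_\a(I)$ and with those of the smaller ideals, and shows that any nonvanishing reduced homology contributing $|\a|-i$ to $\reg(S/\overline I)$ is matched by a contribution of at least the same value to $\reg(S/I)$. I expect the bookkeeping of the degree shift $+1$ in the colon term, reconciled against the inductive bound $\reg(S/(\overline I:x_t))\le\reg(S/(I:x_t))$ together with the standard facts that neither colon by a variable nor reduction modulo a variable raises the regularity of a monomial ideal, to be the most delicate point, and the one where the special structure of weighted oriented graphs is indispensable.
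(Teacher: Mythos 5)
Your proposal correctly assembles the standard toolkit (Newton polyhedron description of $\overline{I}$, a Takayama-type degree-complex formula, splitting along a variable), but as written it is a plan rather than a proof: the two ``comparison lemmas'' you defer to are precisely the mathematical content of the theorem, and at least one of them fails in the form you need. The deletion branch is indeed unproblematic: for monomial ideals $(\overline{I},x_t)$ is generated by $x_t$ and the generators of $\overline{I}$ supported away from $t$, and restriction does commute with integral closure (Corollary \ref{cor_restriction}), so $(\overline{I},x_t)=\overline{I(D\setminus t,\w)}+(x_t)$ and induction plus Lemma \ref{lem_reg_add_vars} handles it. The colon branch is where the argument breaks. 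First, integral closure does not commute with colon: one only has $\overline{I:x_t}\subseteq \overline{I}:x_t$, and the inclusion can be strict already for monomial ideals (for $I=(x^2,y^2)$ one gets $\overline{I}:x=(x,y)\supsetneq (x,y^2)=\overline{I:x}$), so $\overline{I}:x_t$ need not be the integral closure of any weighted oriented edge ideal; moreover the containment goes the wrong way for your ``sandwich,'' since $\overline{I}:x_t\supseteq\overline{I(D,\w')}\supseteq I(D,\w')$ and regularity is not monotone under inclusion. Second, even granting the identification $\overline{I}:x_t=\overline{I(D,\w')}$ with $\w'(t)=\w(t)-1$, closing the induction requires $\reg\bigl(S/(I:x_t)\bigr)+1\le\reg(S/I)$, which is false for general monomial ideals (it fails whenever $x_t$ is a nonzerodivisor modulo $I$) and is a nontrivial structural statement even for unweighted edge ideals; you give no argument for it in the weighted oriented setting, where it would have to be proved from scratch. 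Third, when $t$ is not a sink the colon creates pure powers $x_j^{\w(j)}$ for $j\in N_D^+(t)$, and the ``slightly larger class'' of ideals over which you propose to run the induction is never specified, nor is it checked that the previous two steps survive in that class.

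For comparison, the paper's proof avoids colons by single variables altogether. It runs induction on $|V(D)|$, uses Lemma \ref{lem_regularity_equality_adding_variable} to reduce an extremal pair $(\a,i)$ of $\overline{I}$ to the case $F=\emptyset$, and then computes the radical $\sqrt{\overline{I}:x^{\a}}$ directly (Lemma \ref{lem_associated_radicals_integral}): it equals $I(G)$ plus ideals $n(W)$ indexed by subsets $W$ of $\supp\a$ of $\a$-capacity at least one. The key point (Lemma \ref{lem_trivial_homology}) is that whenever any such $n(W)$ actually appears, the degree complex is acyclic and so cannot carry extremal homology; hence at an extremal pair one has $\sqrt{\overline{I}:x^{\a}}=\sqrt{I:x^{\a}}$, every extremal pair of $\overline{I}$ is a critical pair of $I$, and Lemma \ref{Key0} finishes. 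Your starting points are compatible with this (modulo a sign slip: the contribution to regularity is $|\a|+i$, not $|\a|-i$), but the unproven comparison lemmas are where the theorem actually lives, so the proposal has a genuine gap.
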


Our proof relies on a careful analysis of the degree complexes associated with the integral closure of $I(D,\w)$ and a framework for comparing the regularity of monomial ideals laid out in \cite{MNPTV}. We further compute the regularity of integral closures of arbitrary weighted oriented complete graphs.

\begin{thm}\label{thm_complete} Let $(D,E,\w)$ be a weighted oriented complete graph on $n \ge 2$ vertices. Then 
$$\reg \left ( \overline{ I(D,\w)} \right ) = \max \{ \w(j) \mid j = 1, \ldots, n \} + 1.$$    
\end{thm}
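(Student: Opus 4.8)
The plan is to prove the two inequalities separately, since Theorem \ref{main_thm} by itself is not enough here: for complete graphs one usually has $\reg(I(D,\w)) > w+1$ (already for $K_4$ with all weights equal to $2$ a first syzygy forces $\reg(I(D,\w)) \ge 4 > 3$), so the bound $\reg(\overline{I(D,\w)}) \le \reg(I(D,\w))$ does not detect the exact value. Throughout write $J = \overline{I(D,\w)}$ and $w = \max_j \w(j)$, and recall that for a monomial ideal $x^{\b} \in J$ if and only if $\b \in \NP(I(D,\w))$, i.e.\ $\b \ge \sum_e \lambda_e \u_e$ (coordinatewise) for some generator exponents $\u_e$ with $\lambda_e \ge 0$ and $\sum_e \lambda_e = 1$. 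The basic combinatorial tool is the following support observation: if $\b$ is supported on a set $T$ and $\b \in \NP(I(D,\w))$, then in any such expression only generators $\u_e$ with $\supp(\u_e) \subseteq T$ can occur with positive coefficient.

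For the lower bound $\reg(J) \ge w+1$ I would exhibit a minimal generator of $J$ of degree $w+1$. Choose a vertex $n$ with $\w(n) = w$; since $w > 1$, vertex $n$ is not a source, so some edge $(a,n)$ exists and $g = x_a x_n^{w} \in I(D,\w) \subseteq J$. As $\supp(g) = \{a,n\}$, it suffices to check $g/x_a = x_n^{w} \notin J$ and $g/x_n = x_a x_n^{w-1} \notin J$. Both follow from the support observation: no generator has support inside $\{n\}$, so $w\e_n \notin \NP(I(D,\w))$; and the only generator supported on $\{a,n\}$ is $\u = \e_a + w\e_n$, which fails $\u \le \e_a + (w-1)\e_n$ in the $n$-th coordinate, so $\e_a + (w-1)\e_n \notin \NP(I(D,\w))$. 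Hence $g$ is a minimal generator of $J$ and $\reg(J) \ge \deg g = w+1$.

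For the upper bound $\reg(J) \le w+1$, equivalently $\reg(S/J) \le w$, I would induct on the pair (number of vertices, maximal weight), ordered lexicographically, with base cases $n = 2$ (a principal ideal, already integrally closed, with $\reg(S/J) = w$) and $w = 1$ (the edge ideal of an unweighted complete graph, which has a linear resolution and $\reg(S/J) = 1$). For a vertex $n$ of maximal weight, the exact sequence
\[ 0 \to (S/(J:x_n))(-1) \xrightarrow{\ x_n\ } S/J \to S/(J+(x_n)) \to 0 \]
gives $\reg(S/J) \le \max\{\reg(S/(J:x_n)) + 1,\ \reg(S/(J+(x_n)))\}$. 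The quotient term is clean: by the support observation a monomial $x^{\b}$ with $b_n = 0$ lies in $J$ iff $\b$ lies in the Newton polyhedron of the edge ideal of the induced complete subgraph $D' = D \setminus n$ with weights $\w' = \w|_{D'}$, so $J + (x_n) = \overline{I(D',\w')}\,S + (x_n)$ and, writing $S' = \k[x_j : j \ne n]$, induction yields $\reg(S/(J+(x_n))) = \reg(S'/\overline{I(D',\w')}) = \max_{j\ne n}\w(j) \le w$.

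The heart of the matter, and the step I expect to require the most care, is the colon term: I must show $\reg(S/(J:x_n)) \le w-1$. Here $x^{\b} \in J:x_n$ iff $\b + \e_n \in \NP(I(D,\w))$, so $J:x_n$ is governed by the shifted Newton polyhedron. When $n$ is a \emph{sink} this identifies $J:x_n$ with the integral closure of the edge ideal of the same complete graph in which every weight has been capped at $w-1$ — a weighted oriented complete graph of maximal weight exactly $w-1$ — and the induction on the weight closes the argument. The genuine obstacle is the general case, where $n$ has outgoing edges: then coloning produces pure-power generators $x_t^{\w(t)}$, so $J:x_n$ is no longer literally an edge ideal closure (this already happens for $K_3$), and the clean identification breaks. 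One must instead bound $\reg(S/(J:x_n)) \le w-1$ directly, either by feeding this messier colon ideal back into the same quotient/colon recursion, or by carrying out the degree-complex analysis of \cite{MNPTV} used for Theorem \ref{main_thm} to show that every contributing degree complex $\Delta_{\b}(J:x_n)$ has reduced cohomology only in the range forcing $|\b| + i \le w-1$. Establishing this uniform colon bound is where the main work lies.
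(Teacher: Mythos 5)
Your lower bound is correct, and it is exactly what the paper calls the ``degree reason'': $x_ax_n^{w}$ is a minimal generator of $J=\overline{I(D,\w)}$ of degree $w+1$, so $\reg(J)\ge w+1$. Your reduction of the upper bound via the sequence $0 \to (S/(J:x_n))(-1) \to S/J \to S/(J+(x_n)) \to 0$ is also sound as far as it goes: the quotient term is handled correctly by the support observation (Corollary \ref{cor_restriction}) and induction on the number of vertices. But the proposal stops precisely where the theorem's content begins: the bound $\reg(S/(J:x_n)) \le w-1$ is stated as a goal, not proved, and you say so yourself (``establishing this uniform colon bound is where the main work lies''). This is a genuine gap, not a deferred verification. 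An oriented complete graph has at most one sink, and the maximal-weight vertex need not be a sink (a cyclically oriented triangle has none), so the one case you can treat --- the sink case, via capping weights, and even there the identification of $J:x_n$ with a capped-weight closure is asserted rather than proved --- cannot in general be arranged; the non-sink colon ideal, with its pure-power generators $x_t^{\w(t)}$, is exactly the hard object, and no mechanism for bounding its regularity is given.

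What is missing is a quantitative statement about the Newton polyhedron, and this is what the paper supplies as Lemma \ref{lem_complete_oriented_integral}: if $a_j \le \w(j)$ for all $j$, $|\supp \a| \ge 3$, and $|\a| \ge w+1$, then $x^\a \in \overline{I(D,\w)}$. Its proof is the real work of the theorem: an induction on $n$ whose base case $n=3$ is an explicit computation on the vertices of a polytope (solving a small linear system), and whose inductive step requires the combinatorial Lemma \ref{lem_admissible} (every oriented complete graph on at least $4$ vertices has an admissible vertex) in order to peel off a vertex while rescaling the remaining weights. With this lemma the paper bypasses the colon sequence entirely: it takes an extremal pair $(\a,i)$ of $J$ via Lemma \ref{Key0}, uses Remark \ref{rem_critical} to get $a_j < \w(j)$ (hence $|\supp\a| \ge 2$ once $|\a| \ge w$), and then Lemma \ref{lem_complete_oriented_integral} forces $\Delta_\a(J)$ to be the empty complex when $|\supp\a| \ge 3$ (giving $|\a| = w$ and $i = 0$), while $|\supp\a| = 2$ makes $\Delta_\a(J)$ a cone, hence acyclic, a contradiction. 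Some version of this polyhedral membership statement is unavoidable for your plan as well --- it is what would control $J:x_n$ in the non-sink case --- and without it, or an equivalent, your induction does not close.
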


\section{Degree complexes and integral closures of monomial ideals}

In this section, we discuss the Stanley-Reisner correspondence, degree complexes of monomial ideals, Castelnuovo-Mumford regularity, and integral closures of monomial ideals. We refer to \cite{MNPTV, MV1} for more information.

Throughout the paper, we denote by $S = \k[x_1,\ldots,x_n]$ a standard graded polynomial ring over an arbitrary field $\k$. 

\subsection{Stanley-Reisner correspondence}
Let $\Delta$ be a simplicial complex on $[n] = \{1,\ldots,n\}$. For a face $F \in \Delta$, the link of $F$ in $\Delta$ is defined by 
$$ \lk_{\Delta} F = \{G \in \Delta \mid F \cup G \in \Delta, F \cap G = \emptyset \}.$$
For each subset $F$ of $[n]$, let $x_F = \prod_{i\in F} x_i$ be a squarefree monomial of $S$. We now recall the Stanley-Reisner correspondence. 

\begin{enumerate}
    \item For a squarefree monomial ideal $I \subseteq S$, the Stanley-Reinser complex of $I$ is defined by $\Delta(I) = \{ F \subseteq [n] \mid x_F \notin I\}$. 
    \item For a simplicial complex $\Delta$ on the vertex set $[n]$, the Stanley-Reisner ideal of $\Delta$ is defined by $I_\Delta = (x_F \mid F \notin \Delta)$. 
    \item The Stanley-Reisner ring of $\Delta$ is $\k[\Delta] = S/ I_\Delta$.
\end{enumerate}
From the definition, we have the following simple properties, see e.g., \cite{S}.
\begin{lem}\label{lem_SR} Let $I, J$ be squarefree monomial ideals of $S = \k[x_1, \ldots,x_n]$. Then
    \begin{enumerate}
        \item $\Delta(I)$ is a cone over $t \in [n]$ if and only if $x_t$ does not divide any minimal generator of $I$.
        \item $\Delta (I + J) = \Delta(I) \cap \Delta(J)$.
        \item $\Delta(I \cap J) = \Delta(I) \cup \Delta(J)$.
    \end{enumerate}
\end{lem}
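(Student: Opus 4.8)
\section{Proof proposal}

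The plan is to reduce all three statements to one elementary fact about monomial ideals, namely that a monomial $x_F$ lies in a monomial ideal if and only if it is divisible by one of the minimal generators, and then to translate the various Boolean combinations of the condition ``$x_F \notin (\,\cdot\,)$'' through the defining formula $\Delta(I) = \{F \subseteq [n] \mid x_F \notin I\}$. Parts (2) and (3) are then purely formal, while part (1) is the only place where any care is needed.

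I would dispatch (3) first. For a face $F \subseteq [n]$ one has $x_F \in I \cap J$ precisely when $x_F \in I$ and $x_F \in J$, directly from the definition of intersection. Negating both sides gives $F \in \Delta(I \cap J) \Leftrightarrow x_F \notin I$ or $x_F \notin J \Leftrightarrow F \in \Delta(I) \cup \Delta(J)$. For (2) the one extra ingredient is that, for monomial ideals, $x_F \in I + J$ if and only if $x_F \in I$ or $x_F \in J$: since $I + J$ is generated by the union of minimal generating sets of $I$ and $J$, any monomial it contains is divisible by a generator of $I$ or of $J$. Negating, $F \in \Delta(I + J) \Leftrightarrow x_F \notin I$ and $x_F \notin J \Leftrightarrow F \in \Delta(I) \cap \Delta(J)$.

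For (1) I would first fix the definition in use: $\Delta(I)$ is a cone over $t \in [n]$ if $F \cup \{t\} \in \Delta(I)$ for every face $F \in \Delta(I)$, where the condition has content only when $t \notin F$, in which case $x_{F \cup \{t\}} = x_F x_t$. For the backward direction, assume $x_t$ divides no minimal generator of $I$, and take $F \in \Delta(I)$ with $t \notin F$, so $x_F \notin I$. If $x_F x_t \in I$, then some minimal generator $m$ divides $x_F x_t$; since $x_t \nmid m$ and $m$ is squarefree, $m$ must already divide $x_F$, forcing $x_F \in I$, a contradiction. Hence $x_F x_t \notin I$ and $F \cup \{t\} \in \Delta(I)$. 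For the forward direction I argue contrapositively: if some minimal generator equals $x_G$ with $t \in G$, set $F = G \setminus \{t\} \subsetneq G$; as $x_F$ is a proper divisor of the minimal generator $x_G$, it lies outside $I$, so $F \in \Delta(I)$, whereas $x_{F \cup \{t\}} = x_G \in I$ gives $F \cup \{t\} \notin \Delta(I)$, so $\Delta(I)$ fails to be a cone over $t$.

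The main obstacle, such as it is, sits entirely in part (1): one must pass cleanly between the combinatorial cone condition on faces and the divisibility condition on minimal generators. Two points do the real work and should be stated explicitly, namely that squarefreeness of the generators turns ``$x_t \nmid m$ and $m \mid x_F x_t$'' into ``$m \mid x_F$'', and that a proper monomial divisor of a minimal generator never lies in the ideal. Parts (2) and (3) require none of this and follow by direct negation, so I would present (1) in full and state (2), (3) tersely.
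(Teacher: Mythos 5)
Your proof is correct. The paper gives no argument for this lemma at all—it simply asserts that the properties follow "from the definition" and cites Stanley's book—and your direct verification (negating the divisibility characterizations of membership in $I+J$ and $I\cap J$ for parts (2) and (3), and the divisibility argument with minimal squarefree generators for part (1)) is exactly the routine check the paper leaves to the reader.
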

\begin{defn}
    The $q$-th reduced homology group of $\Delta$ with coefficients $\k$, denoted by $\h_q(\Delta;\k)$, is the $q$-th homology group of the augmented oriented chain complex of $\Delta$ over $\k$.
\end{defn}

A simplicial complex $\D$ is called {\it acyclic} if $\h_i(\Delta;\k) = 0$ for all $i$.
\begin{rem} Let $\Delta$ be a simplicial complex. Then \begin{enumerate}
    \item $\h_{-1}(\Delta;\k) \neq 0$ if and only if $\Delta$ is the empty complex (i.e., $\Delta=\{\emptyset\}$).
    \item If $\Delta$ is a cone over some $t \in [n]$ or $\Delta$ is the void complex  (i.e., $\Delta=\emptyset$), then it is acyclic.
\end{enumerate}
\end{rem}

\subsection{Castelnuovo-Mumford regularity  and degree complexes of monomial ideals} Let $\m = (x_1,\ldots,x_n)$ be the maximal homogeneous ideal of $S$. For a finitely generated graded $S$-module $L$, the Castelnuovo-Mumford regularity (or regularity for short) of $L$ is 
$$\reg (L) = \max \{i + j \mid H_{\m}^i(L)_j \neq 0\},$$
where $H_\m^i(L)$ denotes the $i$-th local cohomology module of $L$ with respect to $\m$. For a non-zero proper homogeneous ideal $J$ of $S$, we have $\reg (J) = \reg (S/J) + 1$. When $I$ is a monomial ideal, we have 
\begin{lem}\label{lem_reg_add_vars} Let $I$ be a monomial ideal and $x$ be a variable of $S$. Then 
$$\reg (I + (x)) \le \reg (I).$$    
\end{lem}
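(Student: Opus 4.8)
The plan is to reduce the statement to the fact that forming the colon with a variable does not increase regularity, via a short exact sequence. After relabeling we may assume $x = x_n$ and that $I$ is a nonzero proper monomial ideal, the remaining cases being trivial (note that $I + (x_n) = S$ forces $I = S$, since for a monomial ideal the image of $I$ in $S/(x_n)$ is the unit ideal only when $I = S$). Multiplication by $x_n$ then yields the exact sequence of finitely generated graded $S$-modules
$$0 \to \bigl(S/(I:x_n)\bigr)(-1) \xrightarrow{\;\cdot x_n\;} S/I \to S/(I + (x_n)) \to 0,$$
whose exactness is immediate from the definition of the colon ideal $(I:x_n)$ and of $I + (x_n)$.

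Next I would extract a regularity bound from this sequence using the long exact sequence of local cohomology modules with respect to $\m$. Writing $A = \bigl(S/(I:x_n)\bigr)(-1)$, $B = S/I$, and $C = S/(I+(x_n))$, the connecting homomorphisms show that, for every $i$ and every degree $j$, nonvanishing of $H_\m^i(C)_j$ forces $H_\m^i(B)_j \neq 0$ or $H_\m^{i+1}(A)_j \neq 0$. Feeding this into the defining formula $\reg(L) = \max\{i+j \mid H_\m^i(L)_j \neq 0\}$ gives
$$\reg(C) \le \max\bigl\{\reg(B),\, \reg(A) - 1\bigr\} = \max\bigl\{\reg(S/I),\, \reg(S/(I:x_n))\bigr\},$$
where the last equality uses $\reg(A) = \reg(S/(I:x_n)) + 1$ coming from the degree shift. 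Since $\reg(I + (x_n)) = \reg(C) + 1$ and $\reg(I) = \reg(S/I) + 1$ by the normalization recalled above, it therefore suffices to establish the colon inequality $\reg(S/(I:x_n)) \le \reg(S/I)$.

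This colon inequality is the crux, and I expect it to be the main obstacle. I would prove it inside the degree-complex framework of the preceding subsection, where the passage from $I$ to $I:x_n$ corresponds to a controlled operation on the degree complexes at the vertex $n$: each face containing $n$ is removed and the remaining complex is a deletion (respectively a link in the squarefree model), so that every nonvanishing reduced homology group contributing to $\reg(S/(I:x_n))$ at a given total degree is matched by one contributing to $\reg(S/I)$ at the same or higher total degree. Alternatively, one may polarize $I$ to reduce to the squarefree case, in which $(I:x_n)$ becomes the Stanley-Reisner ideal of a link and the comparison follows from the regularity estimates of \cite{MNPTV}. The delicate point in either route is the bookkeeping of the total degree shift under the colon operation — in particular controlling the multidegrees with negative coordinates in the local cohomology formula — so that colon by a variable cannot inflate the regularity; once this is handled, combining with the displayed bound completes the proof.
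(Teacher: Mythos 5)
Your exact-sequence step is correct as far as it goes: the sequence $0 \to \bigl(S/(I:x_n)\bigr)(-1) \to S/I \to S/(I+(x_n)) \to 0$ and the long exact sequence in local cohomology do yield $\reg\bigl(S/(I+(x_n))\bigr) \le \max\{\reg(S/I),\, \reg(S/(I:x_n))\}$. But this only trades the lemma for the colon inequality $\reg(S/(I:x_n)) \le \reg(S/I)$, which you yourself flag as ``the crux'' and then do not prove. That inequality is not a formality: it fails for general homogeneous ideals, and for monomial ideals it is a theorem of exactly the same depth as the lemma being proved --- the two statements are the two halves of \cite[Corollary 4.8]{CHHKTT}, which is precisely what the paper invokes in place of a proof (the paper gives no independent argument; it cites \cite{CHHKTT} or \cite[Corollary 2.20]{MNPTV}). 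So the proposal reduces the target statement to an unproven claim of comparable difficulty; that is a genuine gap, not a completed proof.

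The sketches you offer for the colon inequality do not close this gap. In the degree-complex framework, the correct relation is $\sqrt{(I:x_n):x^\a} = \sqrt{I:x^{\a+\e_n}}$, i.e.\ $\Delta_\a(I:x_n) = \Delta_{\a+\e_n}(I)$, so a critical pair $(\a,i)$ of $I:x_n$ with face $F$ would feed into Lemma \ref{Key0} for $I$ only if $F \cap \supp(\a+\e_n) = \emptyset$ --- and this fails exactly when $n \in F$. That case genuinely occurs: for $I=(y^2)\subseteq \k[x,y]$ one has $I:x = I$ and $\reg(S/I)=\reg(S/(I:x))$, whereas the naive exponent shift would predict $\reg(S/I) \ge \reg(S/(I:x))+1$; handling $n \in F$ requires an argument of the type of Lemma \ref{lem_regularity_equality_adding_variable}, i.e.\ passing to the ideal $(I,x_n)$, which is the very statement you are trying to prove, so this route is nearly circular. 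The polarization route can be made to work, but the two facts you would need (that polarization commutes, up to relabeling, with colon by a variable, and the squarefree case via Hochster's formula) are asserted rather than proved. Incidentally, if you are willing to polarize, you can bypass the colon ideal and the exact sequence altogether: $(I+(x_n))^{\mathrm{pol}} = I^{\mathrm{pol}} + (x_{n,1})$, and for a squarefree ideal the addition of a variable gives the Stanley--Reisner ideal of a vertex deletion $\Delta'$; every induced subcomplex of $\Delta'$ is an induced subcomplex of $\Delta$, so Hochster's formula gives $\reg \k[\Delta'] \le \reg \k[\Delta]$ directly. That would be a complete, self-contained proof; what you wrote is not yet one.
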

\begin{proof}
    See \cite[Corollary 4.8]{CHHKTT} or \cite[Corollary 2.20]{MNPTV}.
\end{proof}

To proceed further, we introduce some notation. For an exponent $\a = (a_1, \ldots,a_n) \in \NN^n$, we denote by $x^\a$ the monomial $x_1^{a_1} \cdots x_n^{a_n}$ in $S$ and $|\a| = a_1 + \cdots + a_n$. The support of $\a$ is $\supp \a = \{ i \in [n] \mid a_i \neq 0\}.$

\begin{defn} Let $I$ be a monomial ideal of $S$ and $\a = (a_1, \ldots, a_n) \in \NN^n$ be an exponent. The degree complex of $I$ in degree $\a$ is defined by 
$$\Delta_{\a}(I) = \Delta ( \sqrt{I : x^\a}).$$    
\end{defn}

An ideal of the form $\sqrt{I:x^\a}$ is called an associated radical of $I$. Associated radicals of monomial ideals play a fundamental role in studying the depth of monomial ideals \cite{Hoc}. They also appear in the following result of Minh, Nam, Phong, Thuy, and Vu \cite{MNPTV} for computing the regularity of monomial ideals.

\begin{lem}\label{Key0}
Let $I$ be a monomial ideal in $S$. Then
\begin{multline*}
\reg(S/I)=\max\{|\a|+i \mid \a\in\NN^n,i\ge 0,\h_{i-1}(\lk_{\D_\a(I)}F;\k)\ne 0\\ \text{ for some $F\in \D_\a(I)$ with $F\cap \supp \a=\emptyset$}\}.
\end{multline*}
\end{lem}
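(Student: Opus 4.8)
The plan is to descend to the finely ($\ZZ^n$-) graded local cohomology modules $H^i_\m(S/I)$ and apply Takayama's combinatorial formula. Since each $H^i_\m(S/I)$ is $\ZZ^n$-graded with $\ZZ$-graded strand $H^i_\m(S/I)_d = \bigoplus_{|\b|=d} H^i_\m(S/I)_\b$, the definition of regularity rewrites as
$$\reg(S/I) = \max\{i + |\b| \mid i \ge 0,\ \b \in \ZZ^n,\ H^i_\m(S/I)_\b \ne 0\}.$$
First I would record this identity, so that the problem reduces to locating the nonvanishing multigraded local cohomology and maximizing $i + |\b|$ over it.

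Next I would invoke Takayama's formula: for $\b \in \ZZ^n$ with negative support $F = \{j \mid b_j < 0\}$, one has $\dim_\k H^i_\m(S/I)_\b = \dim_\k \h_{i - |F| - 1}(\Delta_\b(I); \k)$, where $\Delta_\b(I) = \{\sigma \subseteq [n]\setminus F \mid x^\b \notin I S_{F \cup \sigma}\}$ and $S_{F\cup\sigma}$ is the localization of $S$ inverting the variables indexed by $F\cup\sigma$. The crucial observation is that this complex depends on $\b$ only through $F$ and the positive part $\a := \b_+$ (where $a_j = \max(b_j,0)$): inverting the variables in $F$ makes $\prod_{j\in F}x_j^{b_j}$ a unit, so $x^\b \in I S_{F\cup\sigma}$ if and only if $x^\a(x_{F\cup\sigma})^N \in I$ for some $N$. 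Comparing with $\Delta_\a(I) = \Delta(\sqrt{I:x^\a})$, I would check that $\a \in \NN^n$ satisfies $\supp\a \cap F = \emptyset$ and that
$$\Delta_\b(I) = \lk_{\Delta_\a(I)} F,$$
valid exactly when $F \in \Delta_\a(I)$ (otherwise $\Delta_\b(I)$ is void, hence acyclic, and contributes nothing). This is the identification that turns Takayama's formula into a statement about links inside the degree complexes $\Delta_\a(I)$.

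With this in place the proof becomes an optimization. Writing $q = i - |F| - 1$ and using $|\b| = |\a| - \sum_{j\in F}|b_j|$, a nonzero contribution means $\h_q(\lk_{\Delta_\a(I)} F; \k) \ne 0$ for some $F \in \Delta_\a(I)$ with $F \cap \supp\a = \emptyset$, and then
$$i + |\b| = |\a| + (q+1) + \Big(|F| - \sum_{j\in F}|b_j|\Big) \le |\a| + (q+1),$$
because $|b_j| \ge 1$ for each $j \in F$. Since $\lk_{\Delta_\a(I)}F$ and its homology are independent of the chosen negative values, equality is attained by taking $b_j = -1$ for all $j \in F$. The upper bound follows from the displayed inequality applied to every nonvanishing $H^i_\m(S/I)_\b$, and the lower bound from realizing each nonzero homology group via the multidegree with $b_j = -1$ on $F$; relabeling the running index as $i = q+1$ (so $\h_{i-1} = \h_q$) then gives exactly the asserted formula.

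The step I expect to be the main obstacle is the careful application of Takayama's formula at the extremal multidegrees — in particular verifying that it genuinely \emph{computes} (and does not merely bound) $H^{q+|F|+1}_\m(S/I)_\b$ for $\b$ with $b_j=-1$ on $F$, so that the lower bound is attained rather than only approached. This rests on the fact that the degree complex is given by the same link $\lk_{\Delta_\a(I)}F$ for every negative value, and on tracking the reduced-homology conventions in the boundary cases $q = -1$ (where $\lk_{\Delta_\a(I)}F = \{\emptyset\}$ and $\h_{-1} \ne 0$) and $F = \emptyset$ (where the link is $\Delta_\a(I)$ itself and $|\b| = |\a|$). Once these conventions are pinned down, the reindexing $i \mapsto i + |F| + 1$ and the elementary inequality $\sum_{j\in F}|b_j| \ge |F|$ complete the argument.
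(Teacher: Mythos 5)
Your proposal is correct, and it is essentially the paper's own approach written out in full: the paper's proof is simply a citation to Lemmas 2.12 and 2.19 of [MNPTV], and those lemmas are established exactly by the route you take — the $\ZZ^n$-graded strand decomposition of $H^i_\m(S/I)$, Takayama's formula, and the identification of the degree complex at a multidegree $\b$ with negative support $F$ as $\lk_{\D_{\a}(I)}F$ for $\a = \b_+$, followed by the optimization showing the maximum of $i+|\b|$ is attained at $b_j=-1$ on $F$. Your treatment of the boundary conventions (void complex contributing nothing, $\h_{-1}$ of the empty complex) is also the correct bookkeeping.
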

\begin{proof}
    Follows from Lemma 2.12 and Lemma 2.19 of \cite{MNPTV}.
\end{proof}

\begin{defn} A pair $(\a,i) \in \NN^n \times \NN$ is called a {\it critical pair} of $I$ if there exists a face $F$ of $\D_\a(I)$ with $F \cap \supp \a = \emptyset$ and an index $i$ such that $\h_{i-1}(\lk_{\D_\a(I)} F;\k) \neq 0$. It is called an {\it extremal pair} of $I$ if furthermore, $\reg (S/I) = |\a| + i$.    

An exponent $\a \in \NN^n$ is called a {\it critical exponent} (respectively an {\it extremal exponent}) of $I$ if $(\a,i)$ is a critical pair (respectively an extremal pair) of $I$ for some $i \ge 0$.
\end{defn}

For a monomial $f$ in $S$ and $j \in [n]$, $\deg_j(f)$ denotes the degree of $x_j$ in $f$. For a monomial ideal $I$, $\rho_j(I)$ is defined by
 $$\rho_j(I) = \max \{\deg_j(u) \mid u \text{ is a minimal monomial generator of } I\}.$$
 By \cite[Remark 2.13]{MNPTV}, we have 
 \begin{rem}\label{rem_critical} A critical exponent $\a$ of $I$ belongs to the finite set 
 $$\Gamma(I) = \{ \a \in \NN^n \mid a_j < \rho_j(I) \text{ for all } j = 1, \ldots, n\}.$$
 \end{rem}
 
The following lemma is \cite[Lemma 2.9]{HV}; we provide an argument here for completeness.

\begin{lem}\label{lem_regularity_equality_adding_variable} Let $I$ be a monomial ideal. Assume that $(\a,i)$ is an extremal pair of $I$ and $F$ is a face of $\Delta_\a(I)$ such that $F \cap \supp \a = \emptyset$ and $\h_{i-1}(\lk_{\Delta_\a(I)} F;\k) \neq 0$. Assume that $j \in F$. Then $\reg (S/I) = \reg (S/(I,x_j))$.    
\end{lem}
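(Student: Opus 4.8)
The plan is to prove the two inequalities separately. The inequality $\reg(S/(I,x_j)) \le \reg(S/I)$ is immediate from Lemma~\ref{lem_reg_add_vars} together with the identity $\reg(S/J) = \reg(J)-1$. For the reverse inequality, write $I' = (I,x_j)$ and apply the combinatorial formula of Lemma~\ref{Key0}: it suffices to produce an exponent $\b$, an index $i'$, and a face $G \in \Delta_\b(I')$ with $G \cap \supp\b = \emptyset$ and $\h_{i'-1}(\lk_{\Delta_\b(I')} G;\k) \ne 0$ satisfying $|\b|+i' = |\a|+i = \reg(S/I)$, since this forces $\reg(S/I') \ge |\b|+i'$. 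Because $j \in F$ and $F \cap \supp\a = \emptyset$, we have $a_j = 0$; the natural candidate is therefore $\b = \a$, $i' = i$, and $G = F' := F \setminus \{j\}$, for which $|\b|+i' = |\a|+i$ automatically and $F' \cap \supp\a = \emptyset$.

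The first task is to identify the relevant complexes. Because $a_j = 0$, a direct computation with monomial colon ideals gives $I' : x^\a = (I:x^\a) + (x_j)$, hence $\sqrt{I':x^\a} = \sqrt{I:x^\a} + (x_j)$ (the right-hand side being squarefree, hence radical). By Lemma~\ref{lem_SR}(2) this yields $\Delta_\a(I') = \Delta_\a(I) \cap \Delta((x_j))$, which is the deletion $\Delta_\a(I) \setminus j$ of the vertex $j$ from $\Delta_\a(I)$; in particular $F' \in \Delta_\a(I')$. Taking links and writing $\Sigma := \lk_{\Delta_\a(I)} F'$, one checks directly from the definitions that $\lk_{\Delta_\a(I')} F'$ is the deletion $\Sigma \setminus j$, whereas the link of the single vertex $j$ inside $\Sigma$ is $\lk_\Sigma\{j\} = \lk_{\Delta_\a(I)} F$, whose $(i-1)$-st reduced homology is nonzero by hypothesis.

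The conclusion then follows from the Mayer--Vietoris long exact sequence attached to the decomposition of $\Sigma$ into its deletion of $j$ and its (contractible) closed star at $j$, whose intersection is $\lk_\Sigma\{j\}$. The relevant portion reads
$$\h_i(\Sigma;\k) \xrightarrow{\ \partial\ } \h_{i-1}(\lk_\Sigma\{j\};\k) \xrightarrow{\ \alpha\ } \h_{i-1}(\Sigma\setminus j;\k),$$
so $\h_{i-1}(\lk_{\Delta_\a(I')} F';\k) = \h_{i-1}(\Sigma\setminus j;\k)$ is nonzero as soon as the connecting map $\partial$ vanishes, for which it suffices that $\h_i(\Sigma;\k) = 0$. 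This vanishing is the crux of the argument and is where extremality enters: if $\h_i(\lk_{\Delta_\a(I)} F';\k)$ were nonzero, then---since $F' \in \Delta_\a(I)$ and $F' \cap \supp\a = \emptyset$---the pair $(\a,i+1)$ would be a critical pair of $I$ witnessed by $F'$, forcing $\reg(S/I) \ge |\a|+i+1$ and contradicting the extremality $\reg(S/I) = |\a|+i$. Hence $\h_i(\Sigma;\k) = 0$, the map $\alpha$ is injective, and $\h_{i-1}(\lk_{\Delta_\a(I')} F';\k) \ne 0$; by Lemma~\ref{Key0} this gives $\reg(S/(I,x_j)) \ge |\a|+i = \reg(S/I)$, completing the proof. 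I expect the main obstacle to be precisely the vanishing $\h_i(\Sigma;\k)=0$; once extremality is invoked to kill the connecting map, the remainder is a routine unwinding of the Stanley--Reisner dictionary.
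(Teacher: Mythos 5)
Your proposal is correct and follows essentially the same route as the paper's own proof: pass to the deletion $\Delta_\a(I,x_j)=\Delta_\a(I)\setminus j$, work with the link $\Sigma$ of $F'=F\setminus\{j\}$, kill $\h_i(\Sigma;\k)$ via extremality (otherwise $(\a,i+1)$ would be a critical pair), and use Mayer--Vietoris for the decomposition of $\Sigma$ into the deletion of $j$ and the closed star of $j$ to transfer the nonvanishing homology to $\lk_{\Delta_\a(I,x_j)}F'$, then conclude with Lemma \ref{Key0} and Lemma \ref{lem_reg_add_vars}. The only cosmetic difference is that you verify the identity $\sqrt{(I,x_j):x^\a}=\sqrt{I:x^\a}+(x_j)$ by a direct colon computation, where the paper cites a lemma from \cite{MNPTV}.
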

\begin{proof}
    Let $J = I + (x_j)$. Since $j\in F$, $j \notin \supp \a$. By \cite[Lemma 2.24]{MNPTV}, $\sqrt{J:x^\a} = \sqrt{I:x^\a} + (x_j)$. In other words, $\Delta_\a(J)$ is the restriction of $\Delta_\a(I)$ to $[n]\setminus \{j\}$. Let $F' = F \setminus \{j\}$. For simplicity of notation, we set $\Delta = \lk_{\Delta_\a(I)} F'$ and $\Gamma = \lk_{\Delta_\a(J)} F'$. Then we have $\lk_{\Delta_\a(I)} F = \lk_{\Delta} \{j\}$ and $\Delta = \Gamma \cup \{j\} * \lk_{\Delta} \{j\}.$ Note that $\h_i(\Delta) =0$, otherwise, $(\a,i+1)$ is a critical pair of $I$, which is a contradiction to Lemma \ref{Key0}. From the Mayer-Vietoris sequence, we deduce a long exact sequence
    $$0 \to \h_{i-1} (\lk_\Delta \{j\} ;\k) \to \h_{i-1} (\Gamma;\k) \to \cdots.$$
    Hence, $\h_{i-1}(\Gamma;\k) \neq 0$. In other words, $(\a,i)$ is a critical pair of $J$. By Lemma \ref{lem_reg_add_vars} and Lemma \ref{Key0}, we have
    $$\reg (S/J) \ge |\a| + i = \reg (S/I) \ge \reg (S/J).$$ 
    The conclusion follows.
\end{proof}

\subsection{Integral closures of monomial ideals}
We recall the definition and some properties of integral closures of monomial ideals; see \cite[Section 4]{E} and \cite{SH} for more details. We refer to \cite{Hoa} and \cite{Tr} for beautiful expositions on the regularity and depth of integral closures of powers of monomial ideals.

\begin{defn} Let $I$ be a monomial ideal of $S$. The exponent set of $I$ is $E(I) = \{ \a \in \NN^n \mid x^\a \in I\}$. The Newton polyhedron of $I$, denoted by $\NP(I)$, is the convex hull of the exponent set of $I$ in $\RR^n$.
\end{defn}
The following result is standard, see e.g. \cite[Exercise 4.23]{E}.
\begin{lem}\label{lem_newton_polyhedra} Let $I$ be a monomial ideal of $S$. The integral closure of $I$ is a monomial ideal with exponent set $E(\overline{I}) = \NP(I) \cap \ZZ^n$.    
\end{lem}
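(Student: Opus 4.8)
The plan is to characterize the monomials lying in $\overline{I}$ and to identify that set with $\NP(I) \cap \ZZ^n$. First I would record that $\overline{I}$ is itself a monomial ideal: the diagonal torus $(\k^*)^n$ acts on $S$ by scaling the variables, this action fixes the monomial ideal $I$, and applying any such automorphism preserves equations of integral dependence, so it fixes $\overline{I}$ as well; any ideal stable under this torus action is generated by monomials. It therefore suffices to decide, for each exponent $\a \in \NN^n$, whether $x^\a \in \overline{I}$, and to prove that this holds exactly when $\a \in \NP(I)$.

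The central step, which I would isolate as a monomial integrality criterion, is that $x^\a \in \overline{I}$ if and only if $x^{m\a} \in I^m$ for some integer $m \ge 1$. One direction is immediate: if $x^{m\a} \in I^m$, then $x^\a$ satisfies the monic polynomial $T^m - x^{m\a}$, whose constant coefficient $-x^{m\a}$ lies in $I^m$, so $x^\a$ is integral over $I$. For the converse I would start from an equation of integral dependence $(x^\a)^k + c_1 (x^\a)^{k-1} + \cdots + c_k = 0$ with $c_i \in I^i$, and pass to its $\ZZ^n$-graded component in multidegree $k\a$. Since $S$ is one-dimensional in each multidegree, the multidegree-$k\a$ part of $c_i (x^\a)^{k-i}$ equals $\mu_i x^{k\a}$, where $\mu_i$ is the coefficient of $x^{i\a}$ in $c_i$. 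The graded equation then reads $(1 + \sum_i \mu_i)\,x^{k\a} = 0$, forcing $\mu_i \neq 0$ for some $i$; for that index $x^{i\a}$ is a monomial term of $c_i \in I^i$, hence $x^{i\a} = (x^\a)^i \in I^i$, which is the claim with $m = i$.

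It then remains to translate $x^{m\a} \in I^m$ into membership in the Newton polyhedron. If $x^{m\a} \in I^m$, then $m\a = \b_1 + \cdots + \b_m + \gamma$ with each $\b_j \in E(I)$ and $\gamma \in \NN^n$, so $\a = \tfrac1m \sum_j \b_j + \tfrac1m \gamma$; the first summand is a convex combination of points of $E(I) \subseteq \NP(I)$, the second lies in $\RR_{\ge 0}^n$, and since $\NP(I) + \RR_{\ge 0}^n = \NP(I)$ (because $I$ is an ideal, $E(I) + \NN^n = E(I)$), we get $\a \in \NP(I)$. Conversely, if $\a \in \NP(I) \cap \ZZ^n$, then $\a = \sum_j \lambda_j \b_j$ is a convex combination of finitely many $\b_j \in E(I)$; because all the data are integral, I would invoke Carathéodory and a common-denominator argument to take the $\lambda_j$ rational, say $\lambda_j = p_j/m$ with $\sum_j p_j = m$, so that $x^{m\a} = \prod_j (x^{\b_j})^{p_j} \in I^m$. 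Combined with the criterion this gives $x^\a \in \overline{I}$, and hence $E(\overline{I}) = \NP(I) \cap \ZZ^n$.

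The main obstacle lies in two technical points that deserve care. First, the multigraded extraction in the integrality criterion: I must verify that cancellation of the pure power $(x^\a)^k$ genuinely forces a scalar multiple of $x^{i\a}$ to survive in some $c_i$, even though the $c_i$ are a priori arbitrary elements of $I^i$; isolating a single multidegree and using that $S$ is one-dimensional there is exactly what makes this rigorous. Second, the rationality of the convex combination: an integer point of a rational polyhedron need not be an integer combination of its vertices, so securing rational coefficients (and thus a legitimate common denominator $m$) requires reducing to an affinely independent subset via Carathéodory and solving the resulting rational linear system.
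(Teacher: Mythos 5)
The paper itself gives no proof of this lemma: its ``proof'' is a citation to \cite[Exercise 4.23]{E}, so you are supplying details the authors treat as standard. Your route is in fact the standard textbook one (cf.\ Swanson--Huneke \cite{SH}, Proposition 1.4.6): (i) $\overline{I}$ is monomial, (ii) the criterion $x^\a \in \overline{I} \iff x^{m\a} \in I^m$ for some $m \ge 1$, (iii) the polyhedral translation. Your steps (ii) and (iii) are correct and complete: the multidegree-$k\a$ extraction works precisely because a monomial ideal contains every monomial term of each of its elements, and the Carath\'eodory-plus-affine-independence argument legitimately produces rational coefficients (the unique solution of a rational linear system is rational). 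The recession-cone fact $\NP(I) + \RR_{\ge 0}^n = \NP(I)$ that you use in (iii) deserves one more line (passing from $\NN^n$-stability of $E(I)$ to $\RR_{\ge 0}^n$-stability of its convex hull, e.g.\ via the integer box around a real displacement vector), but that is routine.

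The genuine gap is in step (i). The claim ``any ideal stable under the $(\k^*)^n$-action is generated by monomials'' is \emph{false} over a finite field: for $\k = \mathbb{F}_2$ the torus $(\k^*)^n$ is trivial, so every ideal is torus-stable, yet $(x_1 + x_2)$ is not monomial. The usual Vandermonde-type argument behind this claim needs infinitely many scalars, while the paper works over an arbitrary field $\k$. Since monomiality of $\overline{I}$ is part of the statement (and is used throughout the paper, e.g.\ in forming degree complexes of $\overline{I(D,\w)}$), this must be repaired. The standard fix fits your framework with no change to (ii) and (iii): pass to an infinite extension $\k \subseteq \k'$ (say $\k' = \k(t)$) and set $S' = \k'[x_1,\ldots,x_n]$. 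Over $\k'$ the torus argument is valid, so $\overline{IS'}$ is monomial, and by your steps (ii)--(iii) applied over $\k'$ its exponent set is $\NP(I) \cap \ZZ^n$ (the Newton polyhedron depends only on the exponents of the generators, not on the field). Now take $f \in \overline{I}$; any equation of integral dependence of $f$ over $I$ is also one over $IS'$, so $f \in \overline{IS'}$, hence every monomial term $x^\a$ of $f$ has $\a \in \NP(I) \cap \ZZ^n$, and then your steps (ii)--(iii) applied over $\k$ give $x^\a \in \overline{I}$. Thus every term of every element of $\overline{I}$ lies in $\overline{I}$, i.e.\ $\overline{I}$ is monomial, and the rest of your proof goes through verbatim.
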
 
For $\a = (a_1, \ldots, a_n),\b = (b_1, \ldots, b_n)\in \RR^n$, the notation $\a \ge \b$ means that $a_j \ge b_j$ for all $j = 1, \ldots, n$.
\begin{lem}\label{lem_criterion_integral} Let $I$ be a monomial ideal and $\a \in \NN^n$ be an exponent. Then $x^\a \in \overline{I}$ if and only if there exist non-negative rational numbers $c_j$ and exponents $\b_j$ of $E(I)$ for $j = 1, \ldots, s$ such that $\sum_{j=1}^s c_j \ge 1$ and $\a \ge \sum_{j = 1}^s c_j \b_j$.    
\end{lem}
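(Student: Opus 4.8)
The plan is to reduce the whole statement to the polyhedral description already supplied by Lemma \ref{lem_newton_polyhedra}. That lemma gives $x^\a \in \overline{I} \iff \a \in \NP(I) \cap \ZZ^n$, and since $\a \in \NN^n$ this is simply the condition $\a \in \NP(I) = \operatorname{conv}(E(I))$. So both implications come down to characterizing membership in the convex hull of the exponent set. The one structural input I would record first is that $E(I)$ is an order filter: if $\b \in E(I)$ and $\v \in \NN^n$ then $x^{\b+\v} = x^\b x^\v \in I$, so $\b + \v \in E(I)$. From this one gets $\NP(I) = \operatorname{conv}(\Min(E(I))) + \RR_{\ge 0}^n$, where $\Min(E(I))$ is the finite set of exponents of the minimal generators; in particular $\NP(I)$ is closed under adding any vector of $\RR_{\ge 0}^n$. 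This closure property is essentially the only geometric fact the argument uses.

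For the forward direction, from $\a \in \NP(I)$ I would write $\a$ as a convex combination $\a = \sum_{j} d_j \b_j$ with $\b_j \in E(I)$, each $d_j \ge 0$, and $\sum_j d_j = 1$; then automatically $\sum_j d_j = 1 \ge 1$ and $\a \ge \sum_j d_j \b_j$ (with equality), which is what the statement asks for, except that the $d_j$ a priori are only real. The point needing care is that they can be taken rational: the set of tuples $(d_j)$ with $d_j \ge 0$, $\sum_j d_j = 1$, and $\sum_j d_j \b_j \le \a$ is a polytope cut out by finitely many inequalities with integer coefficients, so if it is nonempty over $\RR$ it contains a rational point (for instance a vertex). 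Choosing such rational $c_j$ finishes this direction.

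For the reverse direction, suppose nonnegative rationals $c_j$ and exponents $\b_j \in E(I)$ satisfy $c := \sum_j c_j \ge 1$ and $\a \ge \sum_j c_j \b_j$. Then the point $\p = \sum_j (c_j/c)\,\b_j$ is a genuine convex combination of elements of $E(I)$, hence $\p \in \NP(I)$. Since $c \ge 1$ and each $\b_j \ge \o$, one checks coordinatewise that $\sum_j c_j \b_j \ge \p$, so $\a \ge \p$ and $\a = \p + (\a - \p)$ with $\a - \p \in \RR_{\ge 0}^n$. By the closure of $\NP(I)$ under $+\,\RR_{\ge 0}^n$ we conclude $\a \in \NP(I)$, and Lemma \ref{lem_newton_polyhedra} then gives $x^\a \in \overline{I}$.

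I expect everything here to be routine bookkeeping with the Newton polyhedron except for one point: the passage from a real convex representation to a rational one in the forward direction. I would isolate this as the main thing to state carefully, and it follows from the standard fact that a nonempty polyhedron defined by rational (here integral) data contains rational points.
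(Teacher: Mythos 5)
Your proposal is correct and follows the same route as the paper: the paper's entire proof is the one-line reduction to Lemma \ref{lem_newton_polyhedra}, and your argument simply supplies the details of that reduction (the order-filter property of $E(I)$, closure of $\NP(I)$ under adding $\RR_{\ge 0}^n$, and the rationality of coefficients via a rational polytope). The care you take with the real-to-rational passage is exactly the point the paper leaves implicit, and your handling of it is sound.
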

\begin{proof}
    The conclusion follows from Lemma \ref{lem_newton_polyhedra}.
\end{proof}
For a monomial $f$ in $S$, the support of $f$, denoted by $\supp (f)$, is the set of all indices $i \in [n]$ such that $x_i \mid f$. For a monomial ideal $J$ of $S$ and a subset $V$ of $[n]$, the restriction of $J$ to $V$, denoted by $J_V$, is 
$$J_V = (f \mid f \text{ is a minimal generator of } J \text{ such that } \supp f \subseteq V).$$

We have 
\begin{cor}\label{cor_restriction}
    Let $I$ be a monomial ideal. Then 
    $$ \overline{ I_V } = (\overline{I})_V.$$ 
\end{cor}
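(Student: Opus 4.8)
The plan is to prove the two inclusions separately, using the combinatorial membership criterion of Lemma \ref{lem_criterion_integral}. First I would record a preliminary observation that makes restriction easy to handle: for any monomial ideal $J$, the restriction $J_V$ coincides with the ideal generated by \emph{all} monomials of $J$ whose support lies in $V$, i.e. $J_V = (x^\a \mid x^\a \in J,\ \supp \a \subseteq V)$. Indeed, if $x^\a \in J$ with $\supp \a \subseteq V$, then $x^\a$ is divisible by some minimal generator $u$ of $J$, and $\supp u \subseteq \supp \a \subseteq V$, so $u$ is one of the generators of $J_V$. In particular the minimal generators of $I_V$ are exactly the minimal generators of $I$ supported on $V$.

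For the inclusion $(\overline I)_V \subseteq \overline{I_V}$, I would take a minimal generator $x^\a$ of $(\overline I)_V$, so that $\supp \a \subseteq V$ and $x^\a \in \overline I$. By Lemma \ref{lem_criterion_integral} there are nonnegative rationals $c_j$ and exponents $\b_j \in E(I)$ with $\sum_j c_j \ge 1$ and $\a \ge \sum_j c_j \b_j$; after replacing each $\b_j$ by a minimal generator of $I$ dividing it, I may assume each $\b_j$ is a minimal generator of $I$. The crucial step is the support argument: for every $i \notin V$ we have $a_i = 0$, so $0 = a_i \ge \sum_j c_j (\b_j)_i \ge 0$ forces $c_j (\b_j)_i = 0$ for every $j$. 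Hence every $\b_j$ with $c_j > 0$ is supported on $V$, so it is a minimal generator of $I_V$ and thus lies in $E(I_V)$. Discarding the terms with $c_j = 0$ and applying Lemma \ref{lem_criterion_integral} to $I_V$ shows $x^\a \in \overline{I_V}$.

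For the reverse inclusion $\overline{I_V} \subseteq (\overline I)_V$, I would use that $I_V \subseteq I$ gives $\overline{I_V} \subseteq \overline I$, and then argue that the minimal generators of $\overline{I_V}$ are themselves supported on $V$. If $x^\a$ is a minimal generator of $\overline{I_V}$, writing $\a \ge \sum_j c_j \b_j$ with $\b_j \in E(I_V)$ (taking the $\b_j$ to be minimal generators of $I_V$, hence supported on $V$), any coordinate $i \notin V$ with $a_i > 0$ could be decreased by one without destroying the inequality $\a \ge \sum_j c_j \b_j$, contradicting the minimality of $x^\a$. Thus $\supp \a \subseteq V$, and since $x^\a \in \overline I$, the preliminary observation gives $x^\a \in (\overline I)_V$.

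The routine parts are the reductions to minimal generators; the one step requiring care is the support argument in the first inclusion, where the nonnegativity of both the coefficients $c_j$ and the exponents $\b_j$ is exactly what guarantees that the generators actually contributing to $x^\a$ are supported on $V$. I expect this to be the main (though mild) obstacle.
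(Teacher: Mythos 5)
Your proof is correct, and it takes exactly the route the paper intends: the paper states this corollary without proof, as an immediate consequence of the membership criterion of Lemma \ref{lem_criterion_integral}, and your two-inclusion argument (the support/nonnegativity argument for $(\overline{I})_V \subseteq \overline{I_V}$, and the drop-a-coordinate minimality argument for the reverse) is precisely the verification being left to the reader. No gaps; both directions are handled carefully.
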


\section{Edge ideals of weighted oriented graphs and their integral closures}
In this section, we prove Theorem \ref{main_thm} and compute the regularity of integral closures of edge ideals of weighted oriented complete graphs.

\subsection{Edge ideals of weighted oriented graphs and their associated radicals} 

Let $D$ denote a simple oriented graph without isolated vertices over the vertex set $V(D) = [n] = \{1,\ldots,n\}$ and the edge set $E(D)$. We denote by $G$ the underlying undirected graph associated with $D$. In other words, $V(G) = V(D) = [n]$ and $\{i,j\} \in E(G)$ if and only if either $(i,j)$ or $(j,i)$ is an edge of $D$. A subset $U \subseteq V(D)$ is called an independent set of $D$ if it is an independent set of $G$. For a vertex $i \in V(D)$, the in and out neighborhoods of $i$ are defined by 
$$N_D^-(i) = \{j \in V(D) \mid (j,i) \in E(D)\},~ N_D^+(i) = \{j \in V(D) \mid ( i,j) \in E(D)\}.$$
When $N_D^-(i)  = \emptyset$ (respectively $N_D^+(i) = \emptyset$), we call $i$ a source vertex (respectively a sink vertex) of $D$. For a subset $U$ of $V(D)$, we denote by $N_D^-(U)$ and $N_D^+(U)$ the in and out neighborhoods of $U$. We refer to \cite{D} for more information on graph theory.

Let $\w: V(D) \to \ZZ_+$ be a weight function on the vertices of $D$ such that $\w(j) = 1$ when $j$ is a source vertex. Let $V^+(D)$ be the set of vertices of $D$ with nontrivial weights, i.e., $\w(j) > 1$. The edge ideal of $(D,E,\w)$ is defined by
$$I(D,\w)=(x_ix_j^{\w(j)} \mid (i,j) \in E(D))\subseteq S.$$
When $\w(j) = 1$ for all $j$, we have $I(D,\w) = I(G)$ is the usual edge ideal of $G$. We also denote by $I(D)$ the edge ideal of $D$, i.e., $I(D) = I(G)$. By definition, we have $\rho_j(I(D,\w)) = \w(j)$ for all $j \in V(D)$. By Remark \ref{rem_critical} and \cite[Lemma 2.9]{MV1}, the critical exponents $\a$ of $I(D,\w)$ and $\overline{I(D,\w)}$ satisfy $a_j < \w(j)$ for all $j \in V(D)$. We start by analyzing associated radicals of edge ideals of weighted oriented graphs.

\begin{lem}\label{lem_associated_radicals_weight} Let $(D,E,\w)$ be a weighted oriented graph. Let $\a \in \NN^n$ be an exponent such that $a_j < \w(j)$ for all $j \in V(D)$. We denote by $U = N_D^+(\supp \a)$ and $G\backslash U$ the induced subgraph of $G$ on $V(G) \setminus U$. Then
$$\sqrt{I(D, \w) : x^\a} = I(G \backslash U) + (x_i \mid i \in U).$$
\end{lem}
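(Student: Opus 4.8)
The plan is to compute the colon ideal $I(D,\w):x^\a$ directly from the generators and then pass to the radical, exploiting the fact that for a monomial ideal both operations can be carried out generator by generator. Concretely, if $I = (m_1,\dots,m_r)$ is a monomial ideal, then $I : x^\a = \big( m_k/\gcd(m_k,x^\a) \mid k = 1,\dots,r\big)$, and $\sqrt{I} = \big( \prod_{i \in \supp m_k} x_i \mid k=1,\dots,r \big)$. So the whole computation reduces to handling one generator $x_i x_j^{\w(j)}$, corresponding to an edge $(i,j) \in E(D)$, at a time.

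First I would compute $\big(x_i x_j^{\w(j)}\big) : x^\a$. Since $i \neq j$, and since the hypothesis $a_j < \w(j)$ guarantees $\min(\w(j), a_j) = a_j$, the $x_j$-part of this monomial is $x_j^{\w(j)-a_j}$ with $\w(j) - a_j \ge 1$, so its radical contributes a factor $x_j$. The $x_i$-part is $x_i^{1 - \min(1,a_i)}$, which is $1$ when $i \in \supp\a$ and is $x_i$ when $i \notin \supp\a$. Taking radicals, the edge $(i,j)$ therefore contributes the squarefree monomial $x_j$ if $i \in \supp\a$, and $x_i x_j$ if $i \notin \supp\a$. Hence
\begin{equation*}
\sqrt{I(D,\w):x^\a} = \big(x_j \mid (i,j)\in E(D),\ i \in \supp\a\big) + \big(x_i x_j \mid (i,j)\in E(D),\ i \notin \supp\a\big).
\end{equation*}

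Next I would identify the two summands. The first is generated by the variables $x_j$ with $j$ an out-neighbor of some vertex of $\supp\a$; by definition of $U = N_D^+(\supp\a)$ this is exactly $(x_j \mid j \in U)$, matching the claimed linear part. For the second summand I would work modulo $(x_j \mid j \in U)$: any edge monomial $x_i x_j$ (with $i \notin \supp\a$) for which $i \in U$ or $j \in U$ is absorbed into the linear part, so only the edges with both endpoints outside $U$ survive, and these give generators $x_i x_j$ of $I(G\setminus U)$. This settles the inclusion $\sqrt{I(D,\w):x^\a} \subseteq I(G\setminus U) + (x_j \mid j \in U)$.

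The step I expect to be the crux is the reverse inclusion, specifically showing that every generator $x_i x_j$ of $I(G\setminus U)$ actually appears on the left. Here I would use the definition of $U$ as an out-neighborhood. Given an edge $\{i,j\}$ of $G$ with $i,j \notin U$, orient it as it appears in $D$, say $(i,j)\in E(D)$; if the source $i$ belonged to $\supp\a$, then its out-neighbor $j$ would lie in $N_D^+(\supp\a) = U$, contradicting $j \notin U$. Hence $i \notin \supp\a$, so $(i,j)$ contributes precisely $x_i x_j$ to the second summand. Combining both inclusions yields the asserted equality. The only real subtlety throughout is this bookkeeping linking the out-neighborhood $U$ to the orientation of the surviving edges; the algebra of colons and radicals is entirely routine.
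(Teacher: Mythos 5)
Your proof is correct and follows essentially the same route as the paper: both compute $\sqrt{I(D,\w):x^\a}$ generator by generator (using $a_j < \w(j)$ to see that each edge $(i,j)$ contributes $x_j$ when $i \in \supp\a$ and $x_ix_j$ otherwise) and then match the linear part with $U = N_D^+(\supp\a)$ and the quadratic part with $I(G\setminus U)$. The only difference is presentational: the paper outsources the colon-and-radical computation to a citation of \cite[Lemma 2.24]{MNPTV} and leaves the bookkeeping implicit, whereas you carry it out from first principles, including the key observation that an edge of $G$ avoiding $U$ cannot have its tail in $\supp\a$.
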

\begin{proof} Let $J = \sqrt{I(D,{\w}) : x^\a}$. By \cite[Lemma 2.24]{MNPTV}, the generators of $J$ are $x_kx_l$ with $\{k,l\} \in E(G)$ and $x_i$ for some $i \in [n]$. Since $a_i < \w(i)$, we deduce that $x_i \in J$ if and only if there exists an index $j$ such that 
$$x_i = \sqrt{x_jx_i^{\w(i )}/\gcd(x_jx_i^{\w(i)} ,x^\a) }.$$
In particular, we must have $a_j > 0$. The conclusion follows.
\end{proof}

We now define induced weight functions on induced oriented subgraphs of $D$. Let $\w: V(D) \to \ZZ_+$ be a weight function on the vertices of $D$ and $D_U$ be the induced oriented subgraph of $D$ on $U \subseteq V(D)$. The induced weight function of $\w$ on $V(D_U)$, denoted by $\w_U$, is defined by
$$\w_U(j) = \begin{cases} 
\w(j) & \text{ if } j \text{ is not a source vertex in } D_U,\\
1 & \text{ if } j \text{ is a source vertex in } D_U.
\end{cases}$$

\begin{lem}\label{lem_restriction_weighted} Let $U$ be a subset of $V(D)$. Let $D_U$ and $\w_U$ be the induced subgraph of $D$ and the induced weight function of $\w$ on $U$. Then 
$$I(D,\w) + (x_i \mid i \notin U) = I(D_U,\w_U) +  (x_i \mid i \notin U).$$ 
\end{lem}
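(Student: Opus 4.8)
The plan is to prove the identity of ideals
$$I(D,\w) + (x_i \mid i \notin U) = I(D_U,\w_U) + (x_i \mid i \notin U)$$
by showing that each side is contained in the other, working at the level of minimal monomial generators. Since both sides contain all the variables $x_i$ for $i \notin U$, I can reduce modulo these variables and compare the remaining generators supported on $U$. Concretely, a generator $x_kx_l^{\w(l)}$ of $I(D,\w)$ with $(k,l) \in E(D)$ either has $\{k,l\} \not\subseteq U$, in which case it is absorbed by $(x_i \mid i \notin U)$, or it has both endpoints in $U$, in which case $(k,l)$ is an edge of the induced subgraph $D_U$; the only subtlety is that the exponent on $x_l$ is $\w(l)$, and I must check this matches $\w_U(l)$.

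The key step, and the one I expect to require the most care, is the reconciliation of the weight functions. If $(k,l) \in E(D_U)$ then $l$ is not a source vertex of $D_U$ (it has the in-neighbor $k$ inside $U$), so by the definition of $\w_U$ we have $\w_U(l) = \w(l)$. This shows that the generator $x_kx_l^{\w(l)}$ of $I(D,\w)$ with both endpoints in $U$ coincides exactly with the corresponding generator $x_kx_l^{\w_U(l)}$ of $I(D_U,\w_U)$, giving the containment of the left-hand side in the right-hand side. The reason the definition of $\w_U$ resets source vertices to weight $1$ is precisely so that no spurious discrepancy arises from vertices that were non-source in $D$ but become source in $D_U$: such a vertex $l$ contributes no generator $x_kx_l^{\w_U(l)}$ with in-neighbor in $U$, so its weight is irrelevant to the generators actually appearing.

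For the reverse containment, I take a generator $x_kx_l^{\w_U(l)}$ of $I(D_U,\w_U)$ with $(k,l) \in E(D_U)$; since $E(D_U) \subseteq E(D)$ and, as above, $\w_U(l) = \w(l)$ for any target $l$ of an edge in $D_U$, this is exactly the generator $x_kx_l^{\w(l)}$ of $I(D,\w)$. Thus every generator of the right-hand side lies in the left-hand side. Combining the two inclusions yields the desired equality. The entire argument is a matching of generators, so the only genuine content is the observation that the vertices whose weight gets modified in passing from $\w$ to $\w_U$ are exactly the ones that do not serve as the target of any edge within $U$, and therefore never enter a generator of $I(D_U,\w_U)$.
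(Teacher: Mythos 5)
Your proof is correct and follows essentially the same approach as the paper's, which simply notes that for $j,k \in U$ the monomial $x_jx_k^{\w(k)}$ is a generator of $I(D,\w)$ if and only if it is a generator of $I(D_U,\w_U)$. Your write-up spells out the key point the paper leaves implicit, namely that the target of any edge of $D_U$ is not a source in $D_U$ and hence keeps its weight under $\w_U$.
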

\begin{proof} It suffices to note that for any $j,k \in U$, $x_jx_k^{\w(k)}$ is a generator of $I(D,\w)$ if and only if $x_j x_k^{\w(k)}$ is a generator of $I(D_U,\w_U)$.     
\end{proof}

\subsection{Integral closures of edge ideals of weighted oriented graphs}
In this subsection, we first prove a technical result about associated radicals of integral closures of edge ideals of weighted oriented graphs. We then prove Theorem \ref{main_thm}. Let $(D,E,\w)$ be a weighted oriented graph and $\a$ be an exponent such that $U = \supp \a$ is an independent set of $D$ consisting of sink vertices only. For each subset $W$ of $U$, we define the $\a$-capacity of $W$ by 
$$c(W) = \sum_{j \in W} \frac{a_j}{\w(j)}.$$
For any subset $Z \subseteq V(D)$, we denote by $(Z)$ the ideal $(Z) = ( x_j \mid j \in Z)$. Then, we define the ideal $n(W)$ by 
$$n(W) = \bigcap_{j \in W} (N_D^-(j)) = \bigcap_{j \in W} ( x_k \mid k \in N_D^-(j)).$$
Note that $j\in W$ is a sink vertex, so $N_D(j) = N_D^-(j)$.
\begin{lem}\label{lem_associated_radicals_integral} Assume that $U = \supp \a$ is an independent set consisting of sink vertices of $D$ only. We denote by $\U$ the set of minimal subsets $W \subseteq U$ such that $c(W) \ge 1$. Then 
$$\sqrt{\overline{I(D,\w)} : x^\a} = I(G) + \sum_{W \in \U} n(W).$$
\end{lem}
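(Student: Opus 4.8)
The plan is to compute $\sqrt{\overline{I(D,\w)}:x^\a}$ directly by determining which monomials lie in this radical ideal, using the Newton polyhedron criterion of Lemma \ref{lem_criterion_integral}. Since a radical of a monomial ideal is squarefree, it suffices to determine, for each subset $F\subseteq[n]$, whether $x_F\in\sqrt{\overline{I(D,\w)}:x^\a}$, equivalently whether some power $x_F^N$ lies in $\overline{I(D,\w)}:x^\a$, i.e. whether $x^\a x_F^N\in\overline{I(D,\w)}$ for $N\gg 0$. I will show this condition is captured exactly by the right-hand side $I(G)+\sum_{W\in\U}n(W)$. The two containments can be argued separately.

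For the containment $\supseteq$, I first note that $I(G)\subseteq\sqrt{\overline{I(D,\w)}}\subseteq\sqrt{\overline{I(D,\w)}:x^\a}$, since each edge $\{k,l\}$ gives $x_kx_l^{\w(l)}\in I(D,\w)\subseteq\overline{I(D,\w)}$, whose radical contains $x_kx_l$. The interesting part is showing $n(W)\subseteq\sqrt{\overline{I(D,\w)}:x^\a}$ for each $W\in\U$. Given such a $W$ with $c(W)=\sum_{j\in W}a_j/\w(j)\ge 1$ and a generator $x_k$ of $n(W)$ (so $k\in N_D^-(j)$ for \emph{every} $j\in W$), I would exhibit $x^\a x_k^N$ in the Newton polyhedron by taking the convex combination of the exponents of the edge generators $x_kx_j^{\w(j)}$ for $j\in W$ with coefficients proportional to $a_j/\w(j)$. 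Because each such edge uses the single source variable $x_k$ and a power of $x_j$, the capacity condition $c(W)\ge 1$ is precisely what guarantees that $\sum_{j\in W}\frac{a_j}{\w(j)}\cdot(\mathbf e_k+\w(j)\mathbf e_j)$ has $x_j$-component at most $a_j$ and total $x_k$-weight at least $1$, so a suitable power $x_k^N$ times $x^\a$ dominates a point of $\NP(I(D,\w))$ with coefficient sum $\ge 1$; this puts $x_k\in\sqrt{\overline{I(D,\w)}:x^\a}$.

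For the reverse containment $\subseteq$, suppose $x_i\in\sqrt{\overline{I(D,\w)}:x^\a}$ for some $i$; I must show $x_i\in I(G)+\sum_{W\in\U}n(W)$. By Lemma \ref{lem_criterion_integral} applied to $x^\a x_i^N\in\overline{I(D,\w)}$, there is a convex-type expression $\a+N\mathbf e_i\ge\sum c_l\b_l$ with $\sum c_l\ge 1$, each $\b_l$ the exponent of an edge generator $x_{k_l}x_{m_l}^{\w(m_l)}$. If some edge generator appearing with positive coefficient has \emph{both} its variables outside $\supp\a=U$ (equivalently its $x_i$-head and a neighbor), one extracts an edge of $G$ showing $x_i\in I(G)$; otherwise every contributing edge must feed into a sink vertex of $U$, and collecting the targets $j\in U$ that receive positive mass, the constraint $\a\ge$ (the $U$-components) forces the used capacity on this set to be at most $c$(that set), while $\sum c_l\ge 1$ forces the relevant subset to satisfy the capacity bound, yielding a minimal $W\in\U$ all of whose in-neighborhoods contain $i$, i.e. $x_i\in n(W)$. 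The main obstacle is this reverse direction: I must carefully bookkeep how the total coefficient mass $\sum c_l\ge 1$ distributes among the sink targets and verify that the independence of $U$ and the sink hypothesis prevent ``mixing'' of contributions, so that the head variable $x_i$ is forced to be a common in-neighbor of some capacity-saturating subset; handling the case where $x_i$ is itself one of the sink variables, and ruling out spurious edges that do not contribute, will require the hypotheses that $U$ is independent and consists of sinks in an essential way.
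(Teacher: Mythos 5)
Your overall strategy --- proving the two inclusions separately via the Newton polyhedron criterion of Lemma \ref{lem_criterion_integral} --- is the same as the paper's, but there is a genuine gap that affects both halves: you treat the ideals involved as if their minimal generators were single variables. The ideal $n(W)=\bigcap_{j\in W}(N_D^-(j))$ is an intersection of primes generated by variables, so its minimal generators are the squarefree monomials $x^\b$ such that $\supp\b$ is a \emph{minimal transversal} of the family $\{N_D^-(j)\}_{j\in W}$; these are single variables only when $\bigcap_{j\in W}N_D^-(j)\neq\emptyset$. Your $\supseteq$ argument (``a generator $x_k$ of $n(W)$, so $k\in N_D^-(j)$ for every $j\in W$'') therefore only shows that the typically much smaller ideal $\bigl(x_k \mid k\in\bigcap_{j\in W}N_D^-(j)\bigr)$ lies in $\sqrt{\overline{I(D,\w)}:x^\a}$. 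The paper's own example exhibits the gap: for $W=\{7,8,9\}$ one has $N_D^-(7)\cap N_D^-(8)\cap N_D^-(9)=\emptyset$, yet $n(W)=(x_2x_5,x_3x_5,x_4x_5)\neq 0$, and a monomial such as $x_3x_5$ must still be certified to lie in the radical. Handling a transversal generator $x^\b$ requires mixing edges with \emph{different} tails: one chooses $\gamma_j\le a_j/\w(j)$ with $\sum_{j\in W}\gamma_j=1$ and splits each $\gamma_j$ among the edges $(k,j)$ with $k\in\supp\b\cap N_D^-(j)$, then checks all coordinates of the resulting point of $\NP(I(D,\w))$; your convex combination, in which every edge has the same tail $k$, cannot produce these generators.

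The same misidentification breaks the $\subseteq$ direction: to prove it you must show that \emph{every} minimal generator of $\sqrt{\overline{I(D,\w)}:x^\a}$ lies in $I(G)+\sum_{W\in\U}n(W)$, and these generators are squarefree monomials of degree possibly greater than one (again $x_3x_5$ in the example), so checking only variables $x_i$ does not suffice. Moreover, the conclusion you aim for --- that $i$ is a \emph{common} in-neighbor of a capacity-saturating set $W$ --- is stronger than what is true or needed; the correct statement is that $\supp\b$ meets each $N_D^-(j)$ for $j\in W$, which holds automatically once $W$ is taken to be the set of heads of the edges appearing in the convex decomposition of $x^\a(x^\b)^N$ (independence of $\supp\a$ and $\supp\b$ together with the sink hypothesis force every contributing edge to run from $\supp\b$ to $\supp\a$). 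From there, summing the coordinate inequalities $a_j\ge\sum_{k}\beta_{kj}\w(j)$ over $j\in W$ after dividing by $\w(j)$ gives $c(W)\ge 1$. You flag this bookkeeping yourself as the ``main obstacle'' and leave it undone, so even in the single-variable setting the reverse inclusion is not established. In summary, both directions need to be run at the level of transversals $x^\b$ rather than single variables; as written, the proposal proves strictly less than the lemma in one direction and verifies only part of the required generators in the other.
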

Let us consider an example before proving this technical result. 
\begin{exm}
    Let $(D,E,\w)$ be a weighted oriented graph whose edge ideal is
    $$I = (x_1x_2,x_1x_3,x_1x_4,x_2x_5,x_2x_6,x_2x_7^4,x_2x_{10}^6,x_3x_7^4,x_4x_7^4,x_4x_8^7,x_5x_8^7,x_5x_9^4,x_6x_{10}^6).$$ 
    The nontrivial weights of $D$ are $\w(7) = 4, \w(8) = 7, \w(9) = 4$ and $\w(10) = 6$. Let $x^\a = x_7^2x_8^3x_9 x_{10}^3$. In particular, $U = \{7,8,9,10\}$ is an independent set consisting of sink vertices of $D$ only. Then, we can check that the minimal subsets $W \subseteq U$ for which $c(W) \ge 1$ are $\{7,8,9\}$, $\{7,10\}$ and $\{8,9,10\}$. Furthermore, we have 
    $$N_D^-(7) = \{2,3,4\}, N_D^-(8) = \{4,5\}, N_D^-(9) = \{5\}, \text{ and } N_D^{-}(10) = \{2,6\}.$$
    Using Macaulay2 \cite{M2}, we can check that 
    \begin{align*}
        \sqrt{\overline{I}:x^\a} & = \sqrt{I} + (x_2, x_3x_5,x_3x_6,x_4x_5,x_4x_6,x_5x_6) \\
        &= \sqrt{I} + n(\{7,8,9\}) + n(\{7,10\}) + n(\{8,9,10\}).
    \end{align*}
\end{exm}

\begin{proof}[Proof of Lemma \ref{lem_associated_radicals_integral}] We denote by $\e_1, \ldots, \e_n$ the canonical basis of $\RR^n$. First, we prove that $n(W) \subseteq \sqrt{\overline{I(D,\w)} : x^\a}$ for all subset $W \subseteq U$ such that $c(W) \ge 1$. Let $f = x^\b$ be a minimal generator of $n(W)$. We may assume that $\supp \b$ is an independent set. By Corollary \ref{cor_restriction}, we may assume that $V(D) = W \cup \supp \b$. Since $W$ is an independent set, the underlying graph $G$ is bipartite. For simplicity of notation, we assume that $\supp \a = W = \{1, \ldots, s\}$ and $\supp \b = \{s+1, \ldots, n\}$. By assumption, we have 
$$c(W) = \sum_{j = 1}^s \frac{a_{j}}{\w(j)} \ge 1.$$
In particular, we may choose positive rational numbers $\gamma_j \le a_j / \w(j)$ for all $j = 1, \ldots, s$ such that $\sum_{j=1}^s \gamma_j = 1$. For each edge $(k,j)$ in $D$ with $k \in \supp \b$ and $j \in \supp \a$, we set $\gamma_{kj} = \gamma_j / |N^-_D(j)|$. With this definition, we have 
$$\sum_{k \in \supp \b, j \in N^+_D(k)} \gamma_{kj} = 1.$$ 
Furthermore, each edge $(k,j)$ gives rise to a vector $\c_{kj} = \e_k + \w(j) \e_j \in \NP(I(D,\w))$. Let $\c = \sum_{kj} \gamma_{kj} \c_{kj}$. For all $\ell = 1, \ldots, s$, we have 
$$a_\ell \ge \left ( \sum_{k \in N^-_D(\ell)} \gamma_{k\ell} \right )  \w(\ell) = c_\ell.$$
By Lemma \ref{lem_criterion_integral}, we deduce that $x_1^{a_1} \cdots x_s^{a_s} x_{s+1} \cdots x_n \in \overline{I(D,\w)}$. 

Conversely, let $f = x^\b$ be a minimal generator of $\sqrt{\overline{I(D,\w)} : x^\a}$. We need to prove that $f \in I(G) + \sum_{W \in \U} n(W)$. Again, we may assume that $\supp \b$ is an independent set. By Lemma \ref{lem_criterion_integral}, there exist positive rational numbers $\beta_1, \ldots, \beta_t$ and vectors $\c_1, \ldots,\c_t$ corresponding to minimal generators of $I(D,\w)$ such that 
\begin{equation}\label{eq_3}
    \sum_{i=1}^t \beta_i = 1 \text{ and } a_j \ge \sum_{i=1}^t \beta_i c_{ij} \text{ for all } j \in [n] \backslash \supp \b,
\end{equation}
where $a_j = 0$ if $j \notin \supp \a \cup \supp \b$. In particular, $\supp \c_i \subseteq \supp \a  \cup \supp \b$. Since $\supp \a$ and $\supp \b$ are independent sets, we deduce that $\c_i = \e_k + \w(j) \e_j$ for some $k \in \supp \b$ and $j \in \supp \a$. Let $W = \cup_{i=1}^t \supp \c_i \setminus \supp \b$. Then, we have $W \subseteq \supp \a$. We may replace $\c_i$ by $\c_{kj}$ and $\beta_i$ by $\beta_{kj}$ for $k \in \supp \b\backslash \supp \a$ and $j \in N_D^+(k)$. Eq. \eqref{eq_3} can be rewritten as 
$$\sum_{kj} \beta_{kj} = 1 \text{ and } a_j \ge \sum_{kj} \beta_{kj} \w(j).$$
Equivalently, 
$$c(W) = \sum_{j \in W} \frac{a_j}{\w(j)} \ge 1.$$
The conclusion follows.
\end{proof}
We also have a simple lemma. 
\begin{lem}\label{lem_intersection} Let $I, J$, and $K$ be monomial ideals of $S$. Then 
$$ I + (J\cap K) = (I + J) \cap (I + K).$$    
\end{lem}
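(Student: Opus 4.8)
The plan is to exploit the fact that all four ideals in sight are monomial, so each is completely determined by the set of monomials it contains; hence it suffices to show that a monomial $x^\a$ lies in the left-hand side if and only if it lies in the right-hand side. I would first record the two elementary membership rules for monomial ideals: for monomial ideals $P, Q$ one has $x^\a \in P + Q$ exactly when $x^\a \in P$ or $x^\a \in Q$ (since $P+Q$ is generated by the union of the generating sets, and a monomial lies in a monomial ideal precisely when it is divisible by one of its generators), and $x^\a \in P \cap Q$ exactly when $x^\a \in P$ and $x^\a \in Q$.

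With these rules in hand, the verification becomes a purely Boolean computation. Membership $x^\a \in I + (J \cap K)$ translates into the statement $(x^\a \in I)$ or $\bigl( (x^\a \in J)$ and $(x^\a \in K) \bigr)$, while membership $x^\a \in (I+J) \cap (I+K)$ translates into $\bigl( (x^\a \in I)$ or $(x^\a \in J) \bigr)$ and $\bigl( (x^\a \in I)$ or $(x^\a \in K) \bigr)$. These two logical formulas are equivalent by the distributive law $A \vee (B \wedge C) \iff (A \vee B) \wedge (A \vee C)$, so the two ideals contain exactly the same monomials and are therefore equal.

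I expect no serious obstacle here, since the argument is routine; the only point deserving care is to emphasize why the monomial hypothesis is needed at all. The inclusion $I + (J \cap K) \subseteq (I+J) \cap (I+K)$ holds for arbitrary ideals, but the reverse inclusion is false in general and relies essentially on the membership rule for sums of monomial ideals, which fails without the monomial structure. If a cleaner write-up is preferred, one could instead phrase the whole argument in terms of exponent sets, noting that the operations $+$ and $\cap$ correspond respectively to union and intersection of the (upward-closed) exponent sets, and then invoke distributivity of $\cup$ over $\cap$ in the Boolean lattice of subsets of $\NN^n$.
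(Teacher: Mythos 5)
Your proof is correct, and it takes the same route the paper intends: the paper's ``proof'' merely asserts that addition distributes over intersection for monomial ideals, and your argument (monomial membership in sums and intersections, plus Boolean distributivity $A \vee (B \wedge C) \iff (A \vee B) \wedge (A \vee C)$) is precisely the standard verification of that fact, which the paper leaves implicit. Your additional remark that only the inclusion $I + (J\cap K) \subseteq (I+J) \cap (I+K)$ holds for arbitrary ideals, while the reverse genuinely needs the monomial hypothesis, is accurate and a worthwhile point of care.
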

\begin{proof}
    The conclusion follows from the fact that for monomial ideals, intersection commutes with addition.
\end{proof}
\begin{lem}\label{lem_trivial_homology} Let $I = I(D) = I(G)$. Assume that $U$ is an independent set consisting of sink vertices of $D$ only. Let $\T$ be any non-empty collection of non-empty subsets $W$ of $U$. Then $\Delta (I + \sum_{W \in \T} n(W))$ is acyclic.    
\end{lem}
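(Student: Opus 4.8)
The plan is to prove acyclicity by induction on $N(\T) := \sum_{W\in\T}|W|$, decomposing $\Delta$ along a single chosen set of $\T$ and running a Mayer--Vietoris argument in which every intersection term is forced to be a cone.

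First I would isolate the consequence of the sink hypothesis that drives everything. Fix $j \in U$. Since $j$ is a sink, every edge of $G$ incident to $j$ has the form $\{k,j\}$ with $k \in N_D^-(j)$, so each generator $x_jx_k$ of $I = I(G)$ is divisible by $x_k \in (N_D^-(j))$. Moreover $x_j$ divides no generator of any $n(W)$: these are supported on $\bigcup_{i\in W}N_D^-(i)$, and $j \in N_D^-(i)$ would force an edge $j \to i$, impossible for a sink; and $x_j \notin (N_D^-(j))$ because $D$ has no loops. Hence, for any ideal of the form $P = I + (N_D^-(j)) + \sum_{W}n(W)$ with $(N_D^-(j))$ among the summands, the variable $x_j$ divides no minimal generator of $P$, so by Lemma \ref{lem_SR}(1) the complex $\Delta(P)$ is a cone over $j$, hence acyclic.

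Next I would set up the decomposition. Writing $J = I + \sum_{W\in\T}n(W)$, I pick $W_0 = \{j_1,\dots,j_m\} \in \T$ and put $\T' = \T\setminus\{W_0\}$ and $J' = I + \sum_{W\in\T'}n(W)$. Using $n(W_0) = \bigcap_{j\in W_0}(N_D^-(j))$, Lemma \ref{lem_SR}(2),(3), and distributivity of $\cap$ over $\cup$, I obtain
\[
\Delta(J) = \Delta(J') \cap \bigcup_{j\in W_0}\Delta\big((N_D^-(j))\big) = \bigcup_{j\in W_0}\Delta\big(J'+(N_D^-(j))\big).
\]
If $m = 1$ this is a single cone over $j_1$ by the previous paragraph, and we are done. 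For $m \ge 2$, set $B = \Delta\big(J'+(N_D^-(j_m))\big)$ and $A = \bigcup_{i<m}\Delta\big(J'+(N_D^-(j_i))\big)$. Then $B$ is a cone over $j_m$, hence acyclic. By Lemma \ref{lem_SR}(3) and Lemma \ref{lem_intersection},
\[
A = \Delta\Big(J' + \bigcap_{i<m}(N_D^-(j_i))\Big) = \Delta\Big(I + \sum_{W\in\T'\cup\{W_0\setminus\{j_m\}\}}n(W)\Big),
\]
which has the same shape with strictly smaller $N$, so $A$ is acyclic by induction. Finally,
\[
A\cap B = \bigcup_{i<m}\Delta\big(J'+(N_D^-(j_i))+(N_D^-(j_m))\big) = \Delta\big(J'+(N_D^-(j_m))+n(W_0\setminus\{j_m\})\big),
\]
and since $(N_D^-(j_m))$ is a summand and $j_m$ is a sink, the first paragraph shows this is a cone over $j_m$, hence acyclic. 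The reduced Mayer--Vietoris sequence for $\Delta(J) = A \cup B$ then yields $\h_\ast(\Delta(J);\k) = 0$.

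The main obstacle is arranging the split so that $A \cap B$ is acyclic. This is precisely where the sink hypothesis is indispensable: intersecting every piece of $A$ against $B$ keeps $j_m$ as a common cone apex, collapsing the intersection of unions into a single cone. The remaining bookkeeping---verifying that $A$ genuinely has the inductive form, that $N(\T)$ decreases, and that all complexes involved are nonvoid (so the reduced Mayer--Vietoris sequence applies, as $A\cap B$ contains the vertex $j_m$)---is routine given Lemmas \ref{lem_SR} and \ref{lem_intersection}.
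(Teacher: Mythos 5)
Your proof is correct. It runs on the same engine as the paper's --- the observation that adding $(N_D^-(j))$ for a sink $j$ makes the complex a cone over $j$ via Lemma \ref{lem_SR}(1), the distributivity of Lemma \ref{lem_intersection}, and a three-piece Mayer--Vietoris argument in which two pieces are cones and the third is inductive --- but your decomposition and induction are genuinely different. The paper inducts on $|U|$: it fixes a pivot \emph{vertex} $1\in U$, splits $\T$ into the sets avoiding $1$ (giving $K = I + \sum_{W\in\U} n(W)$) and the sets containing $1$ (giving $\W$), and writes the ideal as $\left(K + n(\{1\})\right)\cap \left(K + \sum_{W\in\W} n(W)\right)$, so the inductive piece lives on the smaller vertex set $U\setminus\{1\}$ with the new collection $\U\cup\W$. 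You instead induct on $\sum_{W\in\T}|W|$: you fix a pivot \emph{set} $W_0\in\T$, expand $\Delta(n(W_0))$ as $\bigcup_{j\in W_0}\Delta\left((N_D^-(j))\right)$, and peel off a single element $j_m$, so the inductive piece keeps $U$ and $\T\setminus\{W_0\}$ intact and merely replaces $W_0$ by $W_0\setminus\{j_m\}$. Your route needs no preliminary normalization (the paper first arranges that every element of $U$ lies in some $W\in\T$), its base case is absorbed into the cone observation rather than treated separately, and it sidesteps the paper's assertion that $\U$ is non-empty --- which can in fact fail (e.g.\ $\T=\{U\}$), though harmlessly, since the paper's argument only ever uses $\W\neq\emptyset$. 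The paper's route, in exchange, matches the restriction-to-induced-subgraph pattern used elsewhere in the paper and processes all sets of $\T$ at one pivot vertex simultaneously. Your explicit check that $A\cap B$ is non-void (so the reduced Mayer--Vietoris sequence applies) is a point the paper leaves implicit.
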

\begin{proof}We prove by induction on $|U|$. Since $U$ consists of sink vertices of $D$ only, we have $N_D^-(j) = N_D(j) = N_G(j)$ for all $j \in U$. For simplicity of notation, we denote by $N(j)$ the neighborhood of $j$ in this proof. When $U$ consists of a single vertex, say $U = \{u\}$. By assumption, $\T$ is non-empty, hence $\T$ consists of the single set $W = \{u\}$. In this case, we have $n(W) = (w \mid w \in N(u))$. By Lemma \ref{lem_SR}, $\Delta (I + n(W))$ is a cone over $u$. Hence, it is acyclic.

Now, assume that $|U| \ge 2$. By induction, we may assume that every element of $U$ belongs to some subset $W$ in $\T$. For simplicity of notation, assume that $1 \in U$. We define 
$$\U = \{ W \in \T \mid 1 \notin W\} \text{ and } \W = \{ W \subseteq U \setminus \{1\}  \mid \{1 \} \cup W \in \T\}.$$ 
By assumption, $\U$ is a non-empty collection of subsets of $U \setminus \{1\}$. Note that $\W$ might be empty; e.g., when $\{1\} \in \T$. Let $K = I + \sum_{U \in \U} n(U)$. By Lemma \ref{lem_intersection}, we have
$$ I + \sum_{W \in \T} n(W) = \left ( K + n ( \{1\}) \right ) \bigcap \left ( K + \sum_{W \in \W} n(W) \right ).$$

If $\W = \emptyset$ then $I + \sum_{W \in \T} n(W) = K + n ( \{1\})$. In particular, $\Delta(I + \sum_{W \in \T} n(W))$ is a cone over $1$. Hence, we may assume that $\W$ is non-empty. Now, we have $\Delta (K + n ( \{1\}))$ and $\Delta( K + n ( \{1\}) + \sum_{W \in \W} n(W))$ are cones over $1$. By induction, $\Delta(K  + \sum_{W \in \W} n(W))$ is acyclic. By Lemma \ref{lem_SR} and the Mayer-Vietoris sequence, the conclusion follows.
\end{proof}

\begin{proof}[Proof of Theorem \ref{main_thm}] Let $(D,E,\w)$ be a weighted oriented graph. We prove by induction on $|V(D)|$ that $\reg (\overline{I(D,\w)}) \le \reg (I(D,\w))$. The base case where $|V(D)| \le 2$ is clear. Thus, we may assume that $| V(D) | \ge 3$. For simplicity of notation, we set $I = I(D,\w)$ and $J = \overline{ I(D,\w)}$. Let $(\a,i)$ be an extremal exponent of $J$ and $F$ be a face of $\Delta_\a (J)$ such that $\h_{i-1} (\lk_{\Delta_\a(J)} F; \k) \neq 0$. First, we reduce to the case $F = \emptyset$. Assume that $j \in F$. By induction, Lemma \ref{lem_regularity_equality_adding_variable} and Lemma \ref{lem_reg_add_vars}, we have 
$$\reg (J) = \reg (J,x_j) \le \reg (I,x_j) \le \reg (I).$$
Thus, assume that $F = \emptyset$. In other words, 
\begin{equation}\label{eq_non_acyclic}
    \h_{i-1}(\Delta_\a(J);\k) \neq 0.
\end{equation}
Let $U = N^+_D(\supp \a)$. We denote by $D'$ and $\w'$ be the induced subgraph of $D$ and the induced weight function of $\w$ on $V(D) \setminus U$. Let $\b$ be the restriction of $\a$ on $V(D')$. By Lemma \ref{lem_associated_radicals_weight}, Lemma \ref{lem_restriction_weighted}, and Corollary \ref{cor_restriction}, we have 
$$\sqrt{\overline{I(D',\w')} : x^\b} + (x_j \mid j \in U) = \sqrt{\overline{I(D,\w)} : x^\a}.$$
In particular, $\Delta_\a(I(D,\w)) = \Delta_\b(I(D',\w'))$ and $\supp \b$ consists of sink vertices of $D'$ only. By Lemma \ref{lem_associated_radicals_weight}, Lemma \ref{lem_associated_radicals_integral}, Lemma \ref{lem_trivial_homology}, and Eq. \eqref{eq_non_acyclic}, we deduce that $$\sqrt{\overline{I(D',\w')}:x^\b} = I(D') \text{ and } \sqrt{I(D,\w):x^\a} = \sqrt{\overline{I(D,\w)} : x^\a}.$$
Hence, $(\a,i)$ is a critical pair of $I(D,\w)$. By Lemma \ref{Key0}, the conclusion follows.
\end{proof}

\subsection{Edge ideals of weighted oriented complete graphs} In this subsection, we assume that $D$ is an oriented complete graph on $n \ge 2$ vertices. We denote by $|\w| = \sum_{j=1}^n \w(j)$. When $n = 2$, we have $\reg (I(D,\w)) = \reg (\overline{I(D,\w)}) = |\w|$. Thus, we assume that $n \ge 3$ in the sequel. Since $D$ is a complete graph, it can have at most one source vertex. We say that $D$ is of type $1$ if $D$ has a source vertex $u$ and $D\setminus u$ has a source vertex, where $D\setminus u$ is the induced subgraph of $D$ on $V(D) \setminus \{u\}$.

\begin{lem}\label{lem_reg_complete} Let $(D,E,\w)$ be a weighted oriented complete graph on $n \ge 2$ vertices. Then 
$$\reg (I(D,\w)) = \begin{cases}
    |\w| - n +2 & \text { if } D \text{ is of type } 1,\\
    |\w| -n + 1 & \text{ otherwise}.
\end{cases}$$    
\end{lem}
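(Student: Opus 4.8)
The plan is to compute $r := \reg(S/I)$ from the degree complexes via Lemma \ref{Key0} and then use $\reg(I) = r+1$; here $I = I(D,\w)$ and I abbreviate $M = |\w| - n = \sum_{j=1}^{n}(\w(j)-1)$. By Remark \ref{rem_critical} every critical exponent $\a$ satisfies $a_j < \w(j)$, hence $\supp\a \subseteq V^+(D)$ and $|\a| \le M$, with $|\a| = M$ forcing the full exponent $\a^{\ast}$ given by $a^{\ast}_j = \w(j)-1$. For any such $\a$, Lemma \ref{lem_associated_radicals_weight} gives $\sqrt{I:x^\a} = I(G\setminus U) + (x_i \mid i \in U)$ with $U = N_D^+(\supp\a)$; since $G$ is complete, $G\setminus U$ is a complete graph on $W_\a := [n]\setminus U$, so its edge ideal has exactly the independent sets of $W_\a$ (the subsets of size $\le 1$) as faces. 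Thus $\Delta_\a(I)$ is the $0$-dimensional complex whose vertex set is precisely $W_\a = [n]\setminus N_D^+(\supp\a)$, a disjoint union of points.

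Because $\Delta_\a(I)$ is discrete, $\h_q(\Delta_\a(I);\k) = 0$ unless $q \in \{-1,0\}$, the link of any of its vertices is the irrelevant complex $\{\emptyset\}$ (reduced homology only in degree $-1$), and $\h_0(\Delta_\a(I);\k) \neq 0$ exactly when $|W_\a| \ge 2$. Hence in Lemma \ref{Key0} the index $i$ is $0$ or $1$: one gets $i = 1$ only through $F = \emptyset$ with $|W_\a|\ge 2$, and $i = 0$ through $F = \emptyset$ with $W_\a = \emptyset$ or through a vertex $F = \{w\}$ with $w \in W_\a\setminus\supp\a$. In every case the contribution $|\a|+i$ is at most $|\a|+1 \le M+1$, so $r \le M+1$ at once. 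Moreover $r = M+1$ is attained iff some $\a$ has $|\a| = M$ and $i = 1$; as $|\a| = M$ forces $\a = \a^{\ast}$ with $\supp\a^{\ast} = V^+(D)$, this is equivalent to $\bigl|[n]\setminus N_D^+(V^+(D))\bigr| \ge 2$, i.e.\ to the existence of two vertices of $D$ each beating every other vertex of $V^+(D)$.

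It remains to identify this homological threshold with the type $1$ condition and to secure the matching lower bound. I would run an induction on $n$: when $D$ has a source $u$ one has $\w(u) = 1$ and $I : x_u = (x_j^{\w(j)} \mid j \ne u)$ is a complete intersection of pure powers with $\reg(S/(I:x_u)) = M$, while by Lemma \ref{lem_restriction_weighted} the ideal $I + (x_u)$ is the edge ideal of the induced weighted complete graph on $D\setminus u$ together with $x_u$; feeding $0 \to (S/(I:x_u))(-1) \xrightarrow{\,x_u\,} S/I \to S/(I+(x_u)) \to 0$ into the inductive hypothesis for $D\setminus u$ should produce the two cases, the jump from $M$ to $M+1$ being controlled by whether $D\setminus u$ again possesses a source (with the reweighting of that second source to $1$ tracked inside $|\w|$). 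The delicate points, and where I expect the real work to lie, are the purely graph-theoretic equivalence between the discreteness condition $\bigl|[n]\setminus N_D^+(V^+(D))\bigr|\ge 2$ and the type $1$ definition, and the sourceless case in which $D$ is strongly connected: there the source induction is unavailable, and one must argue directly from the extremal exponent $\a^{\ast}$ and the combinatorics of $N_D^+$ on $V^+(D)$ to pin down the exact value.
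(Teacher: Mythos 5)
Your first two paragraphs are correct and coincide exactly with the paper's frame: by Lemma \ref{Key0}, Remark \ref{rem_critical} and Lemma \ref{lem_associated_radicals_weight}, every degree complex $\Delta_\a(I)$ of a critical exponent is the discrete complex on $W_\a = [n]\setminus N_D^+(\supp\a)$, hence $i\le 1$, $\reg(I)\le |\w|-n+2$, and the top value is attained if and only if $|W_{\a^\ast}|\ge 2$ for the maximal exponent $\a^\ast=(\w(1)-1,\ldots,\w(n)-1)$. But the argument stops there, and the two statements you defer --- that $|W_{\a^\ast}|\ge 2$ is equivalent to $D$ being of type $1$, and that $\reg(I)$ equals (not merely is bounded by) $|\w|-n+1$ otherwise --- are the entire content of the lemma. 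The routes you sketch do not close them: the colon/restriction induction is unavailable when $D$ has no source, as you concede, and even when a source $u$ exists the short exact sequence can fail to certify the lower bound. For instance, if the source $v$ of $D\setminus u$ has $\w(v)=1$ and $D \setminus u$ is not of type $1$, then $\reg\bigl((S/(I:x_u))(-1)\bigr)$ and $\reg\bigl(S/(I,x_u)\bigr)+1$ both equal $|\w|-n+1$, so the sequence permits cancellation and cannot pin down $\reg(S/I)$; this is precisely a type-$1$ configuration where the answer must jump.

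What remains is short, and the paper does it inside the framework you already built, by computing $W_{\a^\ast}$ directly from Lemma \ref{lem_associated_radicals_weight}. If $D$ is of type $1$ with source $u$ and $v$ a source of $D\setminus u$, then no edge enters $u$, and the only edge entering $v$ comes from $u$, which has $\w(u)=1$ and so $u\notin\supp\a^\ast$; hence $u,v\in W_{\a^\ast}$, $\h_0(\Delta_{\a^\ast}(I);\k)\neq 0$, and $\reg(I)\ge|\w|-n+2$, matching the upper bound. If $D$ is not of type $1$, then every vertex other than the (at most one) source $u$ has an in-neighbour inside $D\setminus u$; that in-neighbour is not a source of $D$ and therefore lies in $\supp\a^\ast$. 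Consequently $W_{\a^\ast}$ is either empty, so $\Delta_{\a^\ast}(I)=\{\emptyset\}$ and $\h_{-1}\neq 0$, or equals the single vertex $\{u\}$ with $a^\ast_u=0$, in which case one takes $F=\{u\}$; in both cases $(\a^\ast,0)$ is a critical pair, no critical pair of value $|\w|-n+1$ exists, and every other critical exponent has $|\a|\le|\w|-n-1$, forcing $\reg(I)=|\w|-n+1$ exactly. Note that this computation rests on identifying $\supp\a^\ast=V^+(D)$ with the set of non-source vertices (so that the in-neighbours just produced really lie in $\supp\a^\ast$); that identification is exactly the ``delicate'' graph-theoretic point you isolated and left unproven, so your caution was justified, but a proof has to carry it out.
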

\begin{proof} For simplicity of notation, we denote by $I = I(D,\w)$. Let $(\a,i)$ be an extremal exponent of $I$. Since $\Delta_\a(I)$ is a subcomplex of $\Delta(I)$, which is the disjoint union of $n$ points, we deduce that $i = 0$ or $i = 1$. Furthermore, by Remark \ref{rem_critical}, we have that $a_j \le \w(j) - 1$ for all $j$. Hence, 
$$\reg (I) = |\a| + i + 1 \le |\w| -n + 2.$$
First, assume that $D$ is of type $1$. We may assume that $1$ is a source vertex of $D$ and $2$ is a source vertex of $D\setminus 1$. Let $a_j = \w(j) -1$ for all $j$. Then $1,2\notin \sqrt{I : x^\a}$. In other words, we have $\h_0(\Delta_\a(I);\k) \neq 0$. By Lemma \ref{Key0}, we deduce that $\reg (I) \ge |\w| -n + 2$.

Now, assume that $D$ is not of type 1. There are two cases. 
\medskip

\noindent\textbf{Case 1.} $D$ has no source vertices. Let $a_j = \w(j) -1$ for all $j$. By Lemma \ref{lem_associated_radicals_weight}, $x_j \in \sqrt{I:x^\a}$ for all $j = 1,\ldots, n$. In other words, $\Delta_\a(I)$ is the empty complex. Hence, $(\a,0)$ is a critical pair of $I$. 
\medskip

\noindent\textbf{Case 2.} $D$ has a source vertex. Assume that $1$ is a source vertex of $D$. By assumption, $D\setminus 1$ has no source vertices. Let $a_j = \w(j) -1$ for all $j$. Since $1$ is a source vertex of $D$, $a_1 = 0$. By Lemma \ref{lem_associated_radicals_weight}, $x_j \in \sqrt{I:x^\a}$ for all $j = 2,\ldots, n$. In other words, $\Delta_\a(I) = \{1\}$. Hence, $(\a,0)$ is a critical pair of $I$ with $F = \{1\}$.   
    
In both cases, by Lemma \ref{Key0}, we have $\reg(I) \ge |\w| -n + 1$. Furthermore, for any other exponent $\a \in \Gamma(I)$, we have $|\a| < |\w| -n$. Hence, $\reg (I) \le |\w| -n + 1$. The conclusion follows.
\end{proof}

In \cite{CZW}, Cui, Zhu, and Wei computed the regularity of powers of edge ideals of some weighted oriented complete graphs. Computing exact values of the regularity of all powers of all weighted oriented complete graphs is an interesting problem. We refer to \cite{CZW} for further information. We now prove some preparation lemmas to compute the regularity of integral closures of edge ideals of weighted oriented complete graphs. We say that a vertex $j$ of $D$ is an admissible vertex if it satisfies the following condition. For any vertex $k \in V(D) \setminus \{j\}$, if $k$ is not a source vertex in $D$, then $k$ is not a source vertex in $D \backslash j$. In other words, $j$ is an admissible vertex if $\w(k) = \w'(k)$ for all $k \neq j$, where $\w'$ is the induced weight function on $V(D) \setminus \{j\}$.

\begin{lem}\label{lem_admissible} Let $D$ be an oriented complete graph on $n \ge 4$ vertices. Then $D$ has an admissible vertex.    
\end{lem}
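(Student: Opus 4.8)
The plan is to translate the notion of admissibility into a purely combinatorial statement about the in-degrees of the tournament $D$, and then finish with an elementary degree-counting argument.

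First I would unwind the definition. Since $D$ is a complete oriented graph, between any two distinct vertices there is exactly one directed edge, so deleting a vertex $j$ can only change a vertex $k \neq j$ from a non-source into a source when \emph{every} in-edge of $k$ came from $j$. Because $D$ is a tournament this means $N_D^-(k) \subseteq \{j\}$, and combined with $k$ not being a source in $D$ (i.e. $N_D^-(k) \neq \emptyset$) it forces $N_D^-(k) = \{j\}$; that is, $k$ has in-degree exactly $1$ with unique in-neighbor $j$. Conversely, if $N_D^-(k) = \{j\}$ then $k$ is not a source in $D$ but becomes one in $D \setminus j$. Hence $j$ fails to be admissible precisely when $j$ is the unique in-neighbor of some vertex.

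Next I would package this. Let $A = \{k \in V(D) \mid |N_D^-(k)| = 1\}$ be the set of vertices of in-degree one, and let $f \colon A \to V(D)$ send each $k \in A$ to its unique in-neighbor. By the previous paragraph the set of non-admissible vertices of $D$ is exactly the image $f(A)$, so it suffices to show $f(A) \neq V(D)$. Since $|f(A)| \le |A|$, the whole lemma reduces to proving $|A| \le n-1$. For this I would use that in a tournament $\sum_{k \in V(D)} |N_D^-(k)| = |E(D)| = \binom{n}{2}$: if every vertex had in-degree one we would get $n = \binom{n}{2}$, which forces $n = 3$. Thus for $n \ge 4$ we have $A \neq V(D)$, so $|f(A)| \le |A| \le n-1 < n$ and some vertex is admissible.

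There is no real obstacle once the reformulation is in place; the content is entirely in recognizing that non-admissibility of $j$ means ``$j$ is the forced predecessor of some in-degree-one vertex.'' The only points to handle with care are the degenerate case $A = \emptyset$ (no vertex has in-degree one, so every vertex is trivially admissible) and the sharpness of the hypothesis: for $n = 3$ the directed $3$-cycle $1 \to 2 \to 3 \to 1$ has $N_D^-(k) = \{f(k)\}$ for every $k$, so $f(A) = V(D)$ and no vertex is admissible. This is precisely why the bound $n \ge 4$ is needed and cannot be relaxed.
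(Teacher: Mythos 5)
Your proof is correct, and it reaches the conclusion by a genuinely different (and more elementary) route than the paper. Both arguments start from the same reformulation: a vertex $j$ fails to be admissible exactly when $N_D^-(k)=\{j\}$ for some vertex $k$; the paper encodes this by an auxiliary digraph $X$ with edges $(j,k)$ whenever $N_D^-(k)=\{j\}$. From there the paper argues by contradiction: if no vertex were admissible, every vertex of $X$ would have an out-neighbor, so $X$ would contain a directed cycle; using completeness of $D$ it shows this cycle must be a triangle (a longer cycle would force some $N_D^-(v_i)$ to contain two vertices), and then, since $n\ge 4$, a vertex $w_1$ outside the triangle yields a $w_2$ with $N_D^-(w_2)=\{w_1\}$ that simultaneously receives edges from all three triangle vertices, a contradiction. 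You instead finish by counting: the non-admissible vertices are exactly the image $f(A)$ of the set $A$ of in-degree-one vertices under the unique-in-neighbor map, and since $\sum_k |N_D^-(k)| = \binom{n}{2} > n$ for $n \ge 4$, not every vertex can have in-degree one, so $|f(A)| \le |A| \le n-1 < n$ and some vertex is admissible. Your approach avoids both the contradiction and the cycle analysis, and it makes the necessity of $n\ge 4$ transparent via the directed $3$-cycle; the paper's approach yields the same sharpness from the structural side, by identifying a dominant directed triangle in $X$ as the only possible obstruction.
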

\begin{proof} Assume by contradiction that $D$ has no admissible vertices. Let $X$ be a directed graph on $[n]$ whose $(i,j)$ is a directed edge of $X$ if and only if $(i,j) \in E(D)$ and $N_D^{-}(j) = \{i\}$. Let $i\in [n]$ be an arbitrary vertex. Since $i$ is not admissible, there must exist a vertex $j \in N_D^+(i)$ such that $k \in N_D^+(j)$ for all $k \neq i$. In other words, $(i,j)$ is an edge in $X$. Hence, every vertex in $X$ has an out neighbor. In particular, $X$ has an induced directed cycle $C = v_1,\ldots,v_t$. We claim that $t = 3$. Assume that $t > 3$. Since $D$ is a complete graph, either $(v_1,v_3)$ or $(v_3,v_1) \in E(D)$. If $(v_1,v_3) \in E(D)$, $N_D^-(v_3) = \{v_1,v_2\}$. If $(v_3,v_1) \in E(D)$, $N_D^-(v_1) = \{v_3,v_t\}$, which is a contradiction. Hence, $t = 3$. Since $n \ge 4$, $V(D) \setminus V(C)$ is nonempty. Let $w_1$ be an element of $V(D) \setminus V(C)$. By definition, we must have $w_1 \in N_D^+(v_i)$ for $v_i \in V(C)$. Since $w_1$ is not admissible, there exists $w_2 \notin V(C)$ such that $(w_1,w_2) \in E(X)$. By definition, $N^-_D(w_2) = \{w_1\}$. This is a contradiction, as $v_i \in N_D^{-}(w_2)$ for all $i\in V(C)$. The conclusion follows. 
\end{proof}

\begin{lem}\label{lem_complete_oriented_integral} Let $(D,E,\w)$ be a weighted oriented complete graph on $n \ge 3$ vertices. Assume that $\a \in \NN^n$ is an exponent such that $a_j \le \w(j)$ for all $j = 1, \ldots, n$, $|\a| \ge \max \{ \w(j) \mid j = 1, \ldots, n \} + 1$ and $|\supp \a| \ge 3$. Then $x^\a \in \overline{I(D,\w)}$.     
\end{lem}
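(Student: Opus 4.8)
The plan is to verify membership in $\overline{I(D,\w)}$ directly through the Newton polyhedron criterion of Lemma \ref{lem_criterion_integral}. Writing each generator $x_ix_j^{\w(j)}$ as the lattice point $\e_i+\w(j)\e_j$, it suffices to produce nonnegative rationals $\gamma_{ij}$, one for each arc $(i,j)\in E(D)$ with $\{i,j\}\subseteq\supp\a$, such that $\sum_{(i,j)}\gamma_{ij}\ge 1$ and, for every vertex $\ell$,
$$\w(\ell)\sum_{i\in N_D^-(\ell)}\gamma_{i\ell}+\sum_{j\in N_D^+(\ell)}\gamma_{\ell j}\le a_\ell,$$
because the left-hand side is exactly the $\ell$-th coordinate of $\sum_{(i,j)}\gamma_{ij}(\e_i+\w(j)\e_j)$. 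An arc with an endpoint outside $\supp\a$ is useless, since that coordinate of $\a$ vanishes, so using Corollary \ref{cor_restriction} and Lemma \ref{lem_restriction_weighted} I would first pass to $V=\supp\a$ and reduce to the full-support case: $D$ is then a tournament on $r=|\supp\a|\ge 3$ vertices with $1\le a_\ell\le\w(\ell)$ for all $\ell$, with $\sum_\ell a_\ell\ge m+1$, and with at most one source $\sigma$, which, being a source, carries weight $\w(\sigma)=1$ and hence $a_\sigma=1$. The problem is thus a purely combinatorial fractional edge-packing on a tournament with a budget $a_\ell$ at each vertex.

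The arithmetic input driving everything is the capacity inequality $\sum_\ell a_\ell/\w(\ell)\ge 1$, immediate from $a_\ell/\w(\ell)\ge a_\ell/m$ and $\sum_\ell a_\ell\ge m+1>m$. When $D$ has a source $\sigma$ the cover is then transparent: I push one unit of weight out of $\sigma$. Since $\sigma$ beats every other vertex I may choose $\gamma_{\sigma k}\in[0,a_k/\w(k)]$ for $k\neq\sigma$ with $\sum_{k}\gamma_{\sigma k}=1$, which is possible because $\sum_{k\neq\sigma}a_k/\w(k)\ge(\sum_k a_k-a_\sigma)/m=(|\a|-1)/m\ge 1$ (here $a_\sigma=1$ is essential). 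The budget at $\sigma$ is $\sum_k\gamma_{\sigma k}=1\le a_\sigma$, the budget at each $k$ is $\w(k)\gamma_{\sigma k}\le a_k$, and the total weight is $1$, so Lemma \ref{lem_criterion_integral} applies and $x^\a\in\overline{I(D,\w)}$.

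The step I expect to be the main obstacle is the sourceless case, where every vertex has positive in-degree, no vertex dominates, and the one-vertex push breaks down (already a regular tournament with $a_\ell\equiv 1$, $\w(\ell)\equiv m$, $r=m+1$ shows that no out-neighborhood alone carries enough capacity). Here I would regard $(\gamma_{ij})$ as a fractional flow and invoke LP duality (equivalently max-flow--min-cut) for the packing program above: it is enough to show that every $y\ge 0$ satisfying $y_i+\w(j)y_j\ge 1$ for all arcs $(i,j)$ has $\sum_\ell a_\ell y_\ell\ge 1$. I would prove this by exploiting the expansion of tournaments, namely that every nonempty set $B$ of vertices has a large in-neighborhood $N_D^-(B)$, so that when one assigns to each vertex $j$ an incoming arc and weights the corresponding constraints by $a_j/\w(j)$, the resulting inequality together with $\sum_\ell a_\ell/\w(\ell)\ge 1$ forces $\sum_\ell a_\ell y_\ell\ge 1$; the delicate point is choosing the assignment so that the induced loads respect the budgets, which is where the cut/Hall condition for the tournament must be checked carefully. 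An alternative route I would keep in reserve is an induction on $r$ using the admissible vertex of Lemma \ref{lem_admissible}: removing an admissible $j\in\supp\a$ preserves the weights of the remaining vertices, so $x^{\a'}\in\overline{I(D\setminus j,\w')}\subseteq\overline{I(D,\w)}$ yields $x^\a=x_j^{a_j}x^{\a'}\in\overline{I(D,\w)}$ whenever $|\a|-a_j\ge m'+1$; the threshold case $|\a|=m+1$, where no vertex can be dropped, is precisely the hard core that the flow argument is designed to settle.
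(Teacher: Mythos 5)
Your reduction to the full-support case and your handling of the source case are correct and coincide with the paper's argument, and your diagnosis (including the regular-tournament example showing the one-vertex push fails) is accurate. But the sourceless case, which you yourself call the main obstacle, is precisely where your proof stops: neither of your two proposed strategies is carried out, and each one defers its difficulty to the other. The LP-duality route only restates the problem: by Farkas, $\a\in\NP(I(D,\w))$ is equivalent to the dual inequality $\sum_\ell a_\ell y_\ell\ge 1$ for every $y\ge 0$ with $y_i+\w(j)y_j\ge 1$ on all arcs, and you never prove that inequality -- the ``expansion of tournaments plus a careful choice of assignment respecting the budgets'' is exactly the content of the lemma, flagged as delicate rather than established. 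Your fallback induction via Lemma \ref{lem_admissible} deletes the admissible vertex $j$ together with its entire budget $a_j$, so it only works when $|\a|-a_j\ge m'+1$, and you concede that the threshold case is ``the hard core that the flow argument is designed to settle.'' Neither route, moreover, supplies a base case, and some irreducible computation is unavoidable: for $n=3$ the sourceless tournament is the directed triangle, and there one genuinely has to verify by hand that exponents like $(1,1,\omega-1)$ lie in the Newton polyhedron.

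The paper closes this gap with three ingredients missing from your sketch. First, a convexity reduction: in the sourceless case it suffices to show that the polytope $Q=\{\a\in\RR^n \mid 1\le a_j\le\w(j) \text{ for all } j,\ |\a|\ge\omega+1\}$ lies in $\NP(I(D,\w))$, and since $\NP(I(D,\w))$ is convex one only needs its vertices; at a vertex either some $a_j=\w(j)$ (immediate, since $j$ has an in-neighbor) or the exponent is essentially pinned down. Second, the base case $n=3$ is settled by solving an explicit $3\times 3$ linear system for the coefficients $c_1,c_2,c_3$ and checking the resulting inequality $(\w(2)-1)(\omega\w(1)-\omega+1)\le(\omega-1)(\w(2)\w(1)-\w(1)+1)$. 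Third -- and this is the idea that rescues your induction exactly where it breaks -- the paper does not discard the admissible vertex $1$ outright: it spends only the fraction $a_1/\w(1)$ of the unit mass along a single in-edge $(2,1)$, i.e.\ subtracts $\frac{a_1}{\w(1)}(\w(1)\e_1+\e_2)$ from $\a$, then clears denominators so that what remains is membership of $\b'=(\w(1)a_2-a_1,\w(1)a_3,\ldots,\w(1)a_n)$ in $\NP\bigl(I(D',(\w(1)-a_1)\w')\bigr)$, the same problem on a smaller complete graph with all weights rescaled by $\w(1)-a_1$. One checks $|\b'|=\w(1)(|\a|-a_1)-a_1\ge(\w(1)-a_1)\omega+1$, so the inductive hypothesis applies even when $|\a|=\omega+1$. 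Without this fractional-spend-and-rescale step (or an actual proof of your dual inequality), the proposal does not prove the lemma.
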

\begin{proof} We prove by induction on $n \ge 3$. Let $J = \overline{I(D,\w)}$ and $\omega = \max \{ \w(j) \mid j = 1, \ldots, n\}$. By Corollary \ref{cor_restriction}, we may assume that $\supp \a = [n]$. First, assume that $D$ has a source vertex, say $1$. Then $\w(1) = 1$ and $a_1 = 1$. In other words, we have $\sum_{j = 2}^n a_j \ge \omega$. In particular, we can choose positive rational numbers $c_j$ for $j = 2, \ldots, n$ such that $c_j \le a_j / \w(j)$ and $\sum_{j = 2}^n c_j = 1$. By Lemma \ref{lem_criterion_integral}, we deduce that $x^\a \in J$. Thus, we assume that $D$ has no source vertex. 

We denote by $\e_1,\ldots, \e_n$ the canonical basis of $\RR^n$ and define 
$$Q = \{ \a \in \RR^n \mid 1 \le a_j \le \w(j) \text{ for all } j = 1, \ldots, n \text{ and } |\a| \ge \omega + 1\}.$$
By Lemma \ref{lem_newton_polyhedra} and the fact that $Q$ and $\NP(I(D,\w))$ are convex sets, it suffices to prove that the vertices of $Q$ belong to $\NP(I(D,\w))$. If $a_j = \w(j)$ for some $j$, then $\a \in \NP(I(D,\w))$ as $\supp \a = [n]$ and $D$ has no source vertex. Hence, we may assume that $\w(j) > a_j \ge 1$ for all $j = 1, \ldots, n$.

First, consider the case where $n = 3$. Since $D$ has no source vertex, we may assume that $I(D,\w) = (x_1x_2^{\w(2)},x_2x_3^{\w(3)},x_3x_1^{\w(1)})$ and $\w(3) = \omega$. Let $\a$ be a vertex of $Q$. Since $a_j < \w(j)$ for all $j$, we deduce that $\a = (1,1,\omega-1)$. The other possible vertices $(1,\omega-1,1)$ and $(\omega-1,1,1)$ are of the same form. By Lemma \ref{lem_criterion_integral}, we need to prove that there exist nonnegative numbers $c_1, c_2, c_3$ such that $c_1 + c_2 + c_3 = 1$ and 
$$\a \ge c_1 (\e_1 + \w(2) \e_2) + c_2(\e_2 + \w(3) \e_3) + c_3 (\e_3 + \w(1) \e_1).$$
Let $c_1,c_2,c_3$ be solutions of the following system 
\begin{alignat*}{3}
c_1         &      &+ \w(1) c_3 &= 1 \\
\w(2) c_1   & + c_2 &       & = 1 \\
c_1          & + c_2 & + c_3 & = 1.
\end{alignat*}
Then 
$$c_3 = \frac{\w(2) - 1}{\w(2) \w(1) - \w(1) + 1} \text { and } c_2 = (\w(1) - 1) c_3.$$
We need to prove that $\omega c_2 + c_3 \le \omega - 1$. Equivalently, 
$$(\w(2) - 1) (\omega \w(1) - \omega + 1) \le (\omega - 1) (\w(2) \w(1) - \w(1) + 1).$$
This is equivalent to the condition that $\w(2) \le \w(2)(\omega - \w(1)) + \w(1)$ which is clear as $\omega = \max \{ \w(j) \mid j = 1, 2,3\}$. 

Now, assume that $n \ge 4$ and $\a$ is an exponent in $Q$. By Lemma \ref{lem_admissible}, $D$ has an admissible vertex. Since $D$ has no source vertex, we may assume that $1$ is admissible and $2 \in N^-_D(1)$. Let $D' = D \setminus 1$ be the induced subgraph of $D$ on $V(D) \setminus \{1\}$. Then $D'$ has no source vertices and $\w'(j) = \w(j)$ for all $j = 2, \ldots, n$, where $\w'$ is the induced weight function of $\w$ on $V(D')$. Let 
$$\b = \a - \frac{a_1}{\w(1)} (\w(1) \e_1 + \e_2) = (0,a_2- \frac{a_1}{\w(1)},a_3,\ldots,a_n).$$ 
By Lemma \ref{lem_criterion_integral}, it suffices to prove that there exist nonnegative numbers $\gamma_j$ and exponents $\c_j$ of $E(I(D',\w'))$ such that $\sum_j \gamma_j = 1 - \frac{a_1}{\w(1)}$ and $\b \ge \sum_j \gamma_j \c_j$. Equivalently, $\b' = (\w(1) a_2 - a_1, \w(1) a_3, \ldots, \w(1) a_n) \in \NP(I(D',(\w(1)-a_1)\w'))$, where $(\w(1)-a_1) \w'$ is the weight function on $V(D')$ obtained from $\w'$ by scaling every weight by $\w(1) - a_1$. For simplicity of notation, we set $J' = I(D',(\w(1)- a_1)\w')$. If $b'_j \ge (\w(1) - a_1) \w(j)$ for some $j$, then $\b' \in \NP(J')$ as $b'_j \ge 1$ for all $j = 2, \ldots, n$ and $D'$ has no source vertex. Hence, we may assume that $b'_j < (\w(1) - a_1) \w(j)$. Furthermore, 
$$|\b'| = \w(1) (|\a| - a_1) - a_1 \ge (\w(1) - a_1) \omega + 1.$$
Hence, by induction, $\b' \in \NP(J')$. The conclusion follows.    
\end{proof}

\begin{proof}[Proof of Theorem \ref{thm_complete}] We may assume that $n \ge 3$. Let $J = \overline{I(D,\w)}$ and $\omega = \max \{ \w(j) \mid j = 1, \ldots, n\}$. Let $(\a,i)$ be an extremal exponent of $J$. By degree reason and Lemma \ref{Key0}, it suffices to prove that $|\a| + i \le \omega$. Since $\Delta_\a(J)$ is a subcomplex of $\Delta(I(D))$ which is the disjoint union of $n$ points, we deduce that $i = 0$ or $i = 1$. Hence, we may assume that $|\a| \ge \omega$ and we need to prove that we must have $|\a| = \omega$ and $i = 0$.

By Remark \ref{rem_critical}, $a_j \le \w(j) - 1$ for all $j = 1, \ldots, n$. Hence, $|\supp \a| \ge 2$. There are two cases. 

\medskip
\noindent\textbf{Case 1.} $|\supp \a| \ge 3$. By Lemma \ref{lem_complete_oriented_integral} and the fact that $x^\a \notin J$, we deduce that $|\a| = \omega$. Furthermore, Lemma \ref{lem_complete_oriented_integral} also implies that $x^\a x_j \in J$ for all $j = 1, \ldots, n$. In other words, $\Delta_\a(J)$ is the empty simplicial complex. Hence, $i = 0$. 

\medskip
\noindent\textbf{Case 2.} $|\supp \a| = 2$. We may assume that $\supp \a = \{1,2\}$ and $(2,1) \in E(D)$. By Lemma \ref{lem_complete_oriented_integral} and Lemma \ref{lem_associated_radicals_weight}, we deduce that $x_j \in \sqrt{J:x^\a}$ for all $j \neq 2$. Furthermore, by Corollary \ref{cor_restriction}, we deduce that $x_2 \notin \sqrt{J:x^\a}$. In other words, $\Delta_\a(J) = \{2\}$. Since $2 \in \supp \a$, this is a contradiction, as $\Delta_\a(J)$ is a cone over $2$. The conclusion follows.
\end{proof}

\begin{rem}
\begin{enumerate}
\item The inequality in Theorem \ref{main_thm} is in general strict, even for complete graphs, as shown by Theorem \ref{thm_complete} and Lemma \ref{lem_reg_complete}. For example, consider the ideal $I = (x_1x_2^3,x_2x_3^5,x_3 x_1^6)$. Then $\reg (\overline{I}) = 7$ and $\reg(I) = 12$.
    \item Duan, Zhu, Cui, and Li \cite{DZCL} classified all integrally closed edge ideals of edge-weighted graphs. Most of them are also not integrally closed when the weights are nontrivial. Note that the integral closure of $I(G,\w)$ when $\w(e) = t$ for all edges $e \in E(G)$ is the same as the integral closure of $I(G)^t$. Hence, proving Conjecture \ref{conj_KP} for edge ideals of edge-weighted graphs is an interesting problem.
    \item By the results of \cite{MV1, MV2}, it is expected that the regularity of integral closures of weighted hypergraphs associated with one-dimensional simplicial complexes are nice. We will carry this analysis in subsequent work.
\end{enumerate}
    
\end{rem}
\subsection*{Acknowledgments} We thank Professor K\"uronya for explaining the intuition behind the Conjecture \ref{conj_KP} and for valuable discussions on the preliminary version of the paper. Guangjun Zhu is supported by the Natural Science Foundation of Jiangsu
Province (No. BK20221353).

\end{document}